\documentclass[12pt]{amsart}

\usepackage{fullpage}

\usepackage{amsfonts,amssymb,amscd,amsmath,enumerate,verbatim,color}
\usepackage[latin1]{inputenc}
\usepackage{amscd}
\usepackage{latexsym}
\usepackage{tikz}
\usepackage{mathptmx}
\usepackage{mathrsfs}
\usepackage{amsthm}
\usetikzlibrary{intersections, calc, arrows}

\newtheorem{thm}{Theorem} [section]
 
\newtheorem{prop}[thm]{Proposition} 
\newtheorem{lemma}[thm]{Lemma}
\newtheorem{cor}[thm]{Corollary}

\theoremstyle{definition}
\newtheorem{definition}[thm]{Definition}
\newtheorem{example}[thm]{Example}
\newtheorem{conjecture}[thm]{Conjecture}
\newtheorem{Remark}[thm]{Remark}
\numberwithin{thm}{section}  
\numberwithin{equation}{section} 
\usetikzlibrary{intersections, calc, arrows}

\def\epoly{{\mathcal P}_G}

\def\NZQ{\mathbb}               

\def\QQ{{\NZQ Q}}
\def\ZZ{{\NZQ Z}}
\def\RR{{\NZQ R}}

\def\ab{{\mathbf a}}

\def\eb{{\mathbf e}}
\def\tb{{\mathbf t}}

\def\wb{{\mathbf w}}

\def\a{\alpha}

\def\opn#1#2{\def#1{\operatorname{#2}}} 
\opn\gr{gr}

\def\int{{\rm int}}

\title{Simplex faces and quadratic toric ideals of lattice polytopes}
\author{Aki Mori and Hidefumi Ohsugi}
\date{}
\address{Aki Mori,
	Center for Physics and Mathematics, 
    Institute for Liberal Arts and Sciences, 
    Osaka Electro-Communication University, 
    Neyagawa, Osaka 572-8530, Japan} 
\email{a-mori@osakac.ac.jp}

\address{Hidefumi Ohsugi,
	Department of Mathematical Sciences,
	School of Science,
	Kwansei Gakuin University,
	Sanda, Hyogo 669-1330, Japan} 
\email{ohsugi@kwansei.ac.jp}

\subjclass[2020]{05E40, 52B20}
\keywords{lattice polytopes, simplex faces, graphs of polytopes, toric ideals, quadratic generation}

\begin{document}

\begin{abstract}
We say that a convex polytope has the clique-face property if every clique in its 1-skeleton is the vertex set of a face.
We establish this property as a geometric necessary condition for quadratic generation of toric ideals. More precisely, we prove that every lattice polytope with primitive edges and a quadratic toric ideal has the clique-face property; in particular, this holds for every 
$(0,1)$-polytope with a quadratic toric ideal. 
For $(0,1)$-polytopes satisfying condition (E), we characterize the clique-face property in terms of divisibility by monomials occurring in quadratic binomials, 
and show that, under the clique-face property, such toric ideals have no indispensable monomials of degree at least three.
For edge polytopes and cut polytopes, we prove that the clique-face property is equivalent to quadratic generation. 
This yields new geometric characterizations of quadratic generation for these classes.
We also prove that all simple polytopes, matroid independence polytopes, and matroid base polytopes have the clique-face property, and discuss the case of stable set polytopes in connection with conjectures on quadratic toric ideals.
\end{abstract}

\maketitle

\section{Introduction}
Let $P \subset \RR^d$ be a convex polytope
and let $V(P)$ be the set of all vertices of $P$. 
The \textit{$1$-skeleton} ${\rm sk}(P)$ of $P$ is the finite simple graph on the vertex set $V(P)$ whose edge set is
$$
\{\{v, w\} : v,w \in V(P), {\rm conv}(\{v,w\}) \mbox{ is an edge of } P\}.
$$
A \textit{clique} of a graph $G$ is the vertex set of a complete subgraph of $G$.   
We say that a convex polytope \(P\) has the \textit{clique-face property} if

\smallskip

\begin{center}
    every clique of ${\rm sk}(P)$ is the vertex set of a face of $P$.   
\end{center}

\smallskip

\noindent
We denote this property by \((*)\).
Here, we regard $P$ itself as a face of $P$,
and hence simplices satisfy the clique-face property ($*$).
We also note that if \(P\) has the clique-face property, then so does every face of \(P\). Consequently, every clique of \({\rm sk}(P)\) is the vertex set of a simplex face of \(P\). Indeed, if \(C\) is the vertex set of a face \(F\), then every subset of \(C\) is again a clique in \({\rm sk}(F)\), and hence is the vertex set of a face of \(F\). Therefore, \(F\) is a simplex.

For example, a simplicial polytope $P$ with ($*$) is called {\it flag}.
Any polytope $P$ of dimension $\le 2$ satisfies ($*$).
If \(\dim P=3\), then by Whitney's theorem \cite{W}, the boundary cycles of the
2-faces of \(P\) are precisely the induced non-separating cycles of
\({\rm sk}(P)\). 
%
Thus a 3-polytope $P$ satisfies ($*$)
if and only if
there is no triangle in ${\rm sk}(P)$ that separates ${\rm sk}(P)$.
It is known \cite{Mor} that order polytopes and chain polytopes of posets satisfy ($*$).

The purpose of this paper is to introduce the clique-face property and establish it as a geometric necessary condition for quadratic generation of toric ideals of lattice polytopes. We also identify important classes of \((0,1)\)-polytopes for which this geometric condition characterizes quadratic generation.
Here, a \textit{lattice polytope} is a convex polytope
all of whose vertices are integer vectors.
Let $P \subset \RR^d$ be a lattice polytope with $P \cap \ZZ^d = \{\ab_1,\ldots,\ab_n\}$.
Let $K[x_1,\dots, x_n]$ be a polynomial ring in $n$ variables over a field $K$
and let $K[\tb^{\pm 1},s]:= K[t_1,t_1^{-1} ,\dots, t_d, t_d^{-1}, s ]$ be a Laurent polynomial ring
in $d+1$ variables over $K$.
Given an integer vector $\ab=(a_1,\dots,a_d) \in \ZZ^d$, 
we set $\tb^\ab=t_1^{a_1} \dots t_d^{a_d} \in K[\tb^{\pm 1},s]$.
Then the \textit{toric ideal} $I_P$ of $P$ is the kernel of 
a ring homomorphism $\pi : K[x_1,\dots, x_n] \rightarrow K[\tb^{\pm 1},s]$ defined by $\pi(x_i) = {\bf t}^{\ab_i} s$ for each $i= 1,2,\dots,n$.
It is known that $I_P$ is generated by homogeneous binomials.
In addition, we have
\begin{equation} \label{basicIP}
    x_1^{\alpha_1} \cdots x_n^{\alpha_n} 
-
x_1^{\beta_1} \cdots x_n^{\beta_n} \in I_P
\iff
\alpha_1 \ab_1 + \cdots + \alpha_n \ab_n
=
\beta_1 \ab_1 + \cdots + \beta_n \ab_n.
\end{equation}
See, e.g., \cite{HHO} for details on toric ideals.
The toric ideal $I_P$ is said to be {\it quadratic}
if $I_P$ is generated by quadratic binomials.
A lattice segment is called \textit{primitive} if its only lattice points are its endpoints.
Our first main result shows that the clique-face property is a geometric necessary condition for quadratic generation, provided that all edges of the polytope are primitive.
\begin{thm}
\label{quadGENE}
Let $P \subset \RR^d$ be a lattice polytope
such that every edge of $P$ is primitive.
If $I_P$ is quadratic,
    then any clique of ${\rm sk}(P)$ is the vertex set of a face of $P$.
\end{thm}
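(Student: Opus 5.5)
The plan is to show that if $C$ is a clique of ${\rm sk}(P)$ that is not the vertex set of a face, then some monomial built from $C$ has a nontrivial fiber. Quadratic moves, however, can never leave that fiber. We use the standard fact that if $I_P$ is generated by binomials $\{f_j\}$, then any two monomials $m,m'$ with $m-m'\in I_P$ are joined by a finite sequence of monomials. In that sequence each step replaces a divisor $g$ of the current monomial by $g'$, where $g-g'=\pm f_j$. So if $I_P$ is quadratic and no degree-two divisor of any monomial in the fiber of $m$ can be moved, the fiber of $m$ is $\{m\}$.

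\emph{Step 1 (rigid quadratic monomials).} Let $v,w\in V(P)$ with $v=w$ or $\{v,w\}$ an edge of ${\rm sk}(P)$. Suppose $x_vx_w-x_ax_b\in I_P$ with $\ab,\mathbf b\in P\cap\ZZ^d$. By (\ref{basicIP}), $v+w=\ab+\mathbf b$, so $(v+w)/2$ is the midpoint of $\ab,\mathbf b$. Since ${\rm conv}\{v,w\}$ is a face of $P$ (a vertex or an edge), both $\ab$ and $\mathbf b$ lie in it. If $v=w$, this forces $\ab=\mathbf b=v$. If $\{v,w\}$ is an edge, primitivity forces $\ab,\mathbf b\in\{v,w\}$, and $\ab+\mathbf b=v+w$ then gives $\{\ab,\mathbf b\}=\{v,w\}$. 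Hence no quadratic binomial of $I_P$ moves $x_vx_w$ nontrivially.

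\emph{Step 2 (rigid fibers).} Let $C$ be a clique and put $m=\prod_{v\in C}x_v$. For any $r\ge1$, every degree-two divisor of $m^r$ is of the form $x_v^2$ or $x_vx_w$ with $v,w\in C$, hence is rigid by Step 1. So no quadratic move applies to $m^r$. If $I_P$ is quadratic, the connectivity statement above then shows that the fiber of $m^r$ is $\{m^r\}$ for every $r$.

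\emph{Step 3 (producing a second monomial).} Assume $C$ is not the vertex set of a face. Let $F$ be the smallest face of $P$ containing $C$. Then $F$ has a vertex $u\notin C$, since otherwise $V(F)=C$. The barycenter $p=\frac1k\sum_{v\in C}v$, where $k=|C|$, lies in the relative interior of $F$; this is the main point to justify. It holds because a face of $P$ containing $p$ contains all of $C$, so by minimality $F$ is the smallest face containing $p$, and the smallest face containing a point has that point in its relative interior.

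Since $p$ is in the relative interior of $F$, for small rational $\varepsilon>0$ the point $q=(p-\varepsilon u)/(1-\varepsilon)$ still lies in $F$. Write $q$ as a convex combination of vertices of $F$ with rational coefficients; this is possible because all data are rational. This gives $p=\sum_{w\in V(F)}\lambda_w w$ with rational $\lambda_w\ge0$, $\sum\lambda_w=1$, and $\lambda_u\ge\varepsilon>0$. Choose $N$ divisible by $k$ with all $N\lambda_w\in\ZZ$. By (\ref{basicIP}),
$$m^{N/k}-\prod_{w}x_w^{N\lambda_w}\in I_P .$$
The second monomial is divisible by $x_u$ with $u\notin C$, so it differs from $m^{N/k}$. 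This contradicts Step 2, so every clique is the vertex set of a face. The only delicate points are the relative-interior claim and the connectivity of fibers under generating binomials, both of which are standard.
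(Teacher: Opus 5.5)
Your proof is correct, and its core matches the paper's. Step~1 is Lemma~\ref{key} together with the paper's observation that $x_i^2$ cannot be moved because $\ab_i$ is a vertex. The contradiction then comes from a nonzero binomial whose first monomial is supported on the clique.

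Where you differ is in how that binomial is produced. The paper inducts on $\dim P$. If ${\rm conv}(C)$ lies in a facet, it passes to that facet using Lemma~\ref{cp}(ii), which rests on combinatorial pure subrings. Otherwise ${\rm conv}(C)$ meets the interior of $P$, and the paper writes that interior point with positive weight on every vertex of $P$. You instead take the smallest face $F$ containing $C$, place the barycenter in the relative interior of $F$, and push off a single vertex $u\in V(F)\setminus C$.

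Your rigidity statement is proved directly in $I_P$, using only that vertices and edges of $P$ are faces of $P$. So you never have to transfer quadratic generation to $I_F$, and both the induction and Lemma~\ref{cp} disappear.

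For the last step you invoke connectivity of fibers under a generating set. The paper uses a lighter fact: in an expression $f=\sum_\mu m_\mu f_\mu$, the monomial $m^{N/k}$ must occur in some term. It is therefore divisible by a monomial of some nonzero quadratic $f_\mu$. With that substitution, your argument also yields the stronger Theorem~\ref{more general fact}, since your binomial involves only vertex variables.

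The paper's route has the advantage of reusing Lemma~\ref{cp}, which it needs later for indispensable monomials and binomials on faces. Yours is shorter and self-contained.
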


If $P \subset \RR^d$ is a $(0,1)$-polytope, then 
$P \cap \ZZ^d$ is the vertex set of $P$.
In particular, every edge of $P$ is primitive.
Hence we have the following from Theorem \ref{quadGENE}.

\begin{cor}\label{zero_one}
    Let $P \subset \RR^d$ be a $(0,1)$-polytope.
If $I_P$ is quadratic,
    then any clique of ${\rm sk}(P)$ is the vertex set of a face of $P$.
\end{cor}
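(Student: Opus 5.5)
The plan is to reduce Corollary~\ref{zero_one} directly to Theorem~\ref{quadGENE}: since the conclusion is identical, it suffices to check that every edge of a $(0,1)$-polytope is primitive. I will get this from the fact that $P \cap \ZZ^d = V(P)$.

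First, $P = {\rm conv}(V(P))$ with $V(P) \subset \{0,1\}^d$, so $P \subset [0,1]^d$. Hence every lattice point of $P$ lies in $\{0,1\}^d$. Each $0/1$-vector $\ab$ is a vertex of the cube $[0,1]^d$. Therefore, if $\ab \in P$, it cannot be written as a nontrivial convex combination of points of $P \subset [0,1]^d$, and so $\ab \in V(P)$. This shows $P \cap \ZZ^d = V(P)$.

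Next, let $E = {\rm conv}(\{v,w\})$ be an edge of $P$, and let $\ab \in E \cap \ZZ^d$. By the previous step, $\ab$ is a vertex of $P$. A vertex of $P$ that lies in a face $E$ is a vertex of $E$, since $\ab$ is extreme in $P \supset E$. The only vertices of the segment $E$ are $v$ and $w$, so $E \cap \ZZ^d = \{v,w\}$. Thus every edge of $P$ is primitive.

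Applying Theorem~\ref{quadGENE} then gives the conclusion: if $I_P$ is quadratic, every clique of ${\rm sk}(P)$ is the vertex set of a face. There is no real obstacle here; all the substance lies in Theorem~\ref{quadGENE}. The only point that needs care is the extremality argument identifying lattice points of $P$ with vertices of $P$.
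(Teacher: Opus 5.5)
Your proposal is correct and follows the paper's route: the paper likewise notes that $P \cap \ZZ^d$ is the vertex set of a $(0,1)$-polytope, concludes that every edge is primitive, and then applies Theorem~\ref{quadGENE}. Your extremality argument (using that $P \subset [0,1]^d$ and that $0/1$-vectors are vertices of the cube) supplies the details the paper leaves implicit.
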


For example, it is known that the toric ideals of $(0,1)$-polytopes belonging to any of the following classes are quadratic:
\begin{itemize}
\item 
hypersimplices;
    \item 
    order polytopes of posets;
    \item
    chain polytopes of posets;
    \item 
    stable set polytopes of weakly chordal graphs, Meyniel graphs,
    perfectly orderable graphs, and almost bipartite graphs;
    \item 
    edge polytopes of chordal bipartite graphs and complete multipartite graphs;
    \item 
    cut polytopes of graphs without $K_4$-minors.
\end{itemize}
From Corollary \ref{zero_one}, $P$ has the clique-face property ($*$) if $P$ belongs to one of the above classes.

On the other hand, the converse of Theorem \ref{quadGENE} does not hold in general (e.g., Example \ref{counterexam}).
However, under condition (E), the clique-face property ($*$) guarantees a necessary condition for quadratic toric ideals.
In \cite{propertyE}, a $(0,1)$-polytope $P$ is said to satisfy (E) if 

\medskip

\begin{enumerate}
    \item[(E)] 
 two distinct vertices $\ab_i$ and $\ab_j$ of $P$ form an edge of $P$ if and only if
 there exists a unique way to write $\ab_i+\ab_j$ as the sum of two vertices of $P$.
\end{enumerate}
    
\medskip

\noindent
Note that the ``only if" part holds in general.
 It was shown in \cite{propertyE} that any matroid polytope
 and any stable set polytope of a graph satisfy (E).
A binomial $f$ in $I_P$ is said to be {\it indispensable}
if either $f$ or $-f$ appears in every system of binomial generators of $I_P$.
 A monomial $m$ is said to be {\it indispensable} if every system of binomial generators of $I_P$ 
 contains a binomial $f$ such that $m$ is a monomial in $f$.
 In particular, two monomials in an indispensable binomial are indispensable.

\begin{thm}\label{indispensable}

Let \(P\) be a \((0,1)\)-polytope satisfying condition (E).
Then \(P\) has the clique-face property \((*)\) if and only if
any monomial appearing in a binomial in \(I_P\) is divisible by
a quadratic monomial appearing in a quadratic binomial in \(I_P\).
In particular, if \(P\) has the clique-face property \((*)\), then
\(I_P\) has no indispensable monomials of degree \(\ge 3\).
\end{thm}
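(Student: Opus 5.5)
The plan is to translate the divisibility condition into a statement about edges of $P$, and then handle each direction with a convexity argument. Throughout, $\ab_1,\dots,\ab_n$ are the vertices of $P$.

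\emph{Reduction.} For a $(0,1)$-polytope, $2\ab_i=\ab_k+\ab_l$ forces $\ab_k=\ab_l=\ab_i$, since the vertices are vertices of the unit cube. Hence no $x_i^2$ occurs in a nonzero quadratic binomial of $I_P$. For $i\neq j$, the monomial $x_ix_j$ occurs in a nonzero quadratic binomial iff $\ab_i+\ab_j$ has a second decomposition as a sum of two vertices. By (E), this holds iff $\{\ab_i,\ab_j\}$ is not an edge of $P$. So the condition in the theorem reads: every monomial $m$ of a nonzero binomial in $I_P$ is divisible by some $x_ix_j$ with $i\neq j$ and $\ab_i,\ab_j$ non-adjacent in ${\rm sk}(P)$. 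Equivalently, the support of $m$ is never a clique.

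\emph{($\Rightarrow$).} Assume $(*)$, and let $m-m'\in I_P$ with $m\neq m'$ and $\deg m=r$. Suppose the support $C$ of $m$ were a clique. By $(*)$ and the remarks in the introduction, $C$ is the vertex set of a simplex face $F$. The point $\frac1r\sum \ab_{i_k}$ lies in $F$, and by (\ref{basicIP}) it equals the average of the vertices of $m'$. Since $F$ is a face, all vertices in the support of $m'$ lie in $F$. Because the vertices of a simplex are affinely independent, the coefficients of an $r$-term representation are unique. Hence $m=m'$, a contradiction. Therefore $C$ is not a clique, which gives the required quadratic divisor.

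\emph{($\Leftarrow$).} Suppose $C$ is a clique that is not the vertex set of a face. It suffices to produce a nonzero binomial of $I_P$ having a monomial supported on $C$: such a monomial cannot be divisible by any admissible $x_ix_j$. Let $p$ be the barycenter of ${\rm conv}(C)$, and let $F$ be the smallest face of $P$ containing $p$. Then $p$ lies in the relative interior of $F$, and $F\supseteq {\rm conv}(C)$. Since $C$ is not the vertex set of a face, $F$ has a vertex $\ab_v\notin C$. Because $p$ lies in the relative interior of $F$, it is a convex combination of the vertices of $F$ with all coefficients positive, and these coefficients can be chosen rational. Clearing denominators gives an identity
\[
N\sum_{c\in C}\ab_c=\sum_{w\in V(F)}\mu_w\ab_w, \qquad \mu_v>0,
\]
with both sides having the same total coefficient. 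By (\ref{basicIP}), the corresponding binomial $\bigl(\prod_{c\in C}x_c\bigr)^N-\prod_w x_w^{\mu_w}$ is a nonzero element of $I_P$ whose first monomial is supported on $C$, contradicting the condition. (If $C$ is affinely dependent, an integral affine dependence on $C$ already gives such a binomial directly.)

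\emph{Indispensability.} Let $m$ be a monomial of degree $r\ge3$ occurring in a binomial $m-n\in I_P$. By the above, $m=uw$ with $u-u'$ a nonzero quadratic binomial in $I_P$. For any binomial generating set $G$, replace each binomial $m-n'\in G$ by the two binomials $(u-u')w$ and $u'w-n'$. Since $m-n'=(u-u')w+(u'w-n')$, the result is still a generating set. We then drop $(u-u')w$, which is a multiple of the quadratic generator $u-u'$ that we add if it is absent. The new generating set has no binomial containing $m$, so $m$ is not indispensable.

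The main obstacle is the ($\Leftarrow$) direction: producing a genuinely new representation from a clique that is not a face. The relative-interior and minimal-face argument above is what I would try first. The points needing care are the rationality of the positive coefficients and the degree bookkeeping in (\ref{basicIP}).
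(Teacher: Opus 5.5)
Your proposal is correct and follows the paper's proof. The direction from $(*)$ to the divisibility condition is the same argument: (E) shows the support of $m$ is a clique, and the resulting simplex face forces $m=m'$. Your converse is the interior-point argument from the proof of Theorem~\ref{more general fact}, except that you pass directly to the smallest face containing the barycenter of ${\rm conv}(C)$ instead of inducting on dimension and using Lemma~\ref{cp} to descend to a facet. That is a slightly cleaner, self-contained route. Your explicit rewriting $m-n'=(u-u')w+(u'w-n')$ also supplies the argument for the indispensability clause, which the paper leaves implicit.
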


From Corollary \ref{zero_one} and Theorem \ref{indispensable}, for a $(0,1)$-polytope $P$ with (E), we have 
\begin{equation} \label{three_conditions}
    I_P \mbox{ is quadratic } \Rightarrow
(*)
\Rightarrow
I_P \mbox{ has no indispensable monomials of degree } \ge 3.  
\end{equation}
In the present paper,
for the following classes of $(0,1)$-polytopes,
we will show that 
these polytopes satisfy (E), and that
three conditions in \eqref{three_conditions} are equivalent:
\begin{itemize}
    \item 
    Edge polytopes of graphs;

    \item 
    Cut polytopes of graphs.
   
\end{itemize}
Next, we study classes of lattice polytopes whose toric ideals are conjectured to be quadratic.
Two important classes of lattice polytopes whose toric ideals are conjectured to be quadratic are smooth polytopes and matroid base polytopes.
Note that any smooth polytope $P$ is simple, i.e., 
${\rm sk}(P)$ is a $d$-regular graph with $d = \dim P$.
We will show that
\begin{itemize}
    \item 
    Any simple polytope satisfies ($*$);

    \item 
    Any matroid independence polytope satisfies ($*$);

        \item 
    Any matroid base polytope satisfies ($*$).
\end{itemize}

The present paper is organized as follows:
In Section 2, we give a sufficient condition for ($*$) in terms of toric ideals (Theorem \ref{more general fact}).
From this, it follows that quadratic generation guarantees the clique-face property when every edge of the polytope is primitive (Theorem \ref{quadGENE}).
In particular, for $(0,1)$-polytopes, quadratic generation guarantees ($*$) (Corollary \ref{zero_one}).
In Section 3, we study $(0,1)$-polytopes with (E).
        A $(0,1)$-polytope $P$ with (E) satisfies ($*$) if and only if any monomial appearing in a binomial in $I_P$ is divisible by a quadratic monomial appearing in a quadratic binomial in $I_P$ (Theorem \ref{indispensable}).
In particular,     
    if a $(0,1)$-polytope $P$ satisfies (E) and ($*$), then $I_P$ has no indispensable monomials of degree $\ge 3$.  
In addition, we will show that (i) edge polytopes of graphs and (ii) cut polytopes of graphs satisfy (E) and that
three conditions in \eqref{three_conditions} are equivalent for these polytopes.
In Section 4, we study classes of lattice polytopes whose toric ideals are conjectured to be quadratic.
We will show that any simple polytope, any matroid independence polytope, and any matroid base polytope satisfy ($*$).
Finally, we discuss stable set polytopes of graphs in connection with a conjecture.

\subsection*{Acknowledgment}
This work was supported by 
JSPS KAKENHI 24K00534 and 26K17026.

\section{Quadratic toric ideals and the clique-face property}

In the present section, we prove Theorem \ref{quadGENE}.
The following fact is fundamental.

\begin{lemma}\label{key}
Let $P \subset \RR^d$ be a lattice polytope
such that every edge of $P$ is primitive.
Let $P \cap \ZZ^d = \{\ab_1,\ldots,\ab_n\}$.
If a nonzero binomial $x_i x_j - x_k x_\ell$ 
belongs to $I_P$ with $i\neq j$, then ${\rm conv}(\ab_i , \ab_j)$ is not an edge of $P$.
\end{lemma}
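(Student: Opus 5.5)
Argue by contradiction: assume $E={\rm conv}(\ab_i,\ab_j)$ is an edge of $P$ and $x_ix_j - x_kx_\ell \in I_P$ is nonzero. By \eqref{basicIP}, $\ab_i+\ab_j=\ab_k+\ab_\ell$. Because the binomial is nonzero, $\{k,\ell\}\neq\{i,j\}$ as multisets. The plan is to show that $\ab_k,\ab_\ell$ must lie on $E$ and that primitivity of $E$ then forces $\{k,\ell\}=\{i,j\}$, which is a contradiction.

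\emph{Step 1: $\ab_k$ and $\ab_\ell$ lie on $E$.} Since $E$ is a face, there is a linear functional $\wb$ and a constant $c$ with $\langle \wb, x\rangle \le c$ on $P$ and equality exactly on $E$. Then $\langle \wb,\ab_k\rangle+\langle \wb,\ab_\ell\rangle = \langle \wb,\ab_i+\ab_j\rangle = 2c$. Each summand is at most $c$, so both equal $c$, and hence $\ab_k,\ab_\ell\in E$.

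\emph{Step 2: primitivity.} Every edge of $P$ is primitive, so $E\cap\ZZ^d=\{\ab_i,\ab_j\}$. The $\ab$'s are lattice points, so $\ab_k,\ab_\ell\in\{\ab_i,\ab_j\}$. The points $\ab_1,\dots,\ab_n$ are distinct and $i\ne j$, so $\ab_i\ne\ab_j$. Now check the possible pairs. If $\ab_k=\ab_\ell=\ab_i$, then $2\ab_i=\ab_i+\ab_j$ gives $\ab_i=\ab_j$, a contradiction; the case $\ab_k=\ab_\ell=\ab_j$ is symmetric. Hence $\{k,\ell\}=\{i,j\}$, so $x_kx_\ell=x_ix_j$ and the binomial is zero, again a contradiction.

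There is no real obstacle here. The only point needing care is Step 1, the supporting-hyperplane argument, which shows that a midpoint lying on a face forces both endpoints onto that face. Primitivity is used exactly once, to pin the lattice points of $E$ down to its two endpoints.
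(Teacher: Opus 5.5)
Your proof is correct and follows essentially the same route as the paper's: a supporting functional of the edge forces $\ab_k,\ab_\ell$ onto ${\rm conv}(\ab_i,\ab_j)$, and primitivity then gives $\{k,\ell\}=\{i,j\}$. You spell out the degenerate cases $\ab_k=\ab_\ell\in\{\ab_i,\ab_j\}$ explicitly, where the paper covers them with ``since $i\neq j$'', and that extra detail is fine.
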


\begin{proof}
From (\ref{basicIP}), we have $\ab_i+\ab_j=\ab_k+\ab_\ell$.
Suppose, to the contrary, that 
${\rm conv}(\ab_i,\ab_j)$ is an edge of $P$. 
Since ${\rm conv}(\ab_i,\ab_j)$ is a face of $P$, there exist $\wb\in\RR^d$ and $b\in\RR$ such that \[ \wb\cdot\ab\leq b \quad\text{for all $\ab\in P$}, \] and \[ {\rm conv}(\ab_i,\ab_j)=\{\ab\in P:\wb\cdot\ab=b\}. \] 
In particular, \[ \wb\cdot\ab_i=\wb\cdot\ab_j=b. \] 
Taking the inner product of $\ab_i+\ab_j=\ab_k+\ab_\ell$ with $\wb$, we obtain \[ 2b = \wb\cdot(\ab_i+\ab_j) = \wb\cdot(\ab_k+\ab_\ell) \leq 2b. \] 
Since both $\wb\cdot\ab_k$ and $\wb\cdot\ab_\ell$ are at most $b$, equality implies \[ \wb\cdot\ab_k=\wb\cdot\ab_\ell=b. \] Hence $\ab_k,\ab_\ell\in {\rm conv}(\ab_i,\ab_j)$. 
Since ${\rm conv}(\ab_i,\ab_j)$ is primitive, its only lattice points are its endpoints. 
Thus ${\rm conv}(\ab_i,\ab_j)\cap\ZZ^d=\{\ab_i,\ab_j\}$, and hence $\ab_k,\ab_\ell\in\{\ab_i,\ab_j\}$. 
Since $i\neq j$ and $\ab_k+\ab_\ell=\ab_i+\ab_j$, it follows that $\{\ab_k,\ab_\ell\}=\{\ab_i,\ab_j\}$ as multisets. 
Therefore, $x_kx_\ell=x_ix_j$, which contradicts the assumption that $x_ix_j-x_kx_\ell$ is nonzero. 
Thus ${\rm conv}(\ab_i,\ab_j)$ is not an edge of $P$. 
\end{proof}

Note that ``every edge of $P$ is primitive" is needed in Lemma \ref{key}.
For example, although
\[
(0,1) + (3,1) = (1,1) + (2,1) 
\]
holds,
${\rm conv}(\{ (0,1), (3,1) \})$ might be an edge of $P$.
From the theory of combinatorial pure subrings \cite{cpureR1, cpure}, we have the following.

\begin{lemma}\label{cp}
    Let $Q$ be a face of a lattice polytope $P$.
    Let $\{\ab_1,\dots,\ab_\ell\}$ be the vertex set of $Q$
    and let $\{\ab_1,\dots,\ab_m\}$ be the vertex set of $P$.    
    Then we have the following{\rm :}
    \begin{itemize}
        \item[(i)]    If $I_P$ is quadratic,
    then so is $I_Q$.
    \item[(ii)] 
    If every binomial in \(I_P \cap K[x_1,\dots,x_m]\) is generated by quadratic binomials in \(I_P\), then every binomial in 
\(I_Q \cap K[x_1,\dots,x_\ell]\) is generated by quadratic binomials in \(I_Q\).  
\item[(iii)] Every indispensable monomial of $I_Q$ is an indispensable monomial of $I_P$. 
\item[(iv)] Every indispensable binomial of $I_Q$ is an indispensable binomial of $I_P$.
    \end{itemize}
 
\end{lemma}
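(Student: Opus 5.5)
The plan is to exhibit $K[x_i : \ab_i \in Q]$ as a \emph{combinatorial pure subring} and to use the retraction onto it. Index the lattice points so that $Q\cap\ZZ^d$ corresponds to a subset $S$ of the variables, and write $K[x_S]$ for the polynomial ring in these variables. The vertices $\ab_1,\dots,\ab_\ell$ of $Q$ are among them, and they are also vertices of $P$. Then $I_Q = I_P\cap K[x_S]$ by \eqref{basicIP}. Let $\varphi: K[x_1,\dots,x_n]\to K[x_S]$ be the $K$-algebra retraction that fixes $x_i$ for $i\in S$ and sends every other variable to $0$.

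\textbf{Step 1 (key step).} Every binomial $u-v\in I_P$ has either both monomials in $K[x_S]$ or neither. To see this, choose $\wb,b$ with $\wb\cdot\ab\le b$ on $P$ and $Q=\{\ab\in P:\wb\cdot\ab=b\}$, exactly as in the proof of Lemma \ref{key}. If $u=x_{i_1}\cdots x_{i_r}$ and $v=x_{j_1}\cdots x_{j_r}$, then $\sum\ab_{i_k}=\sum\ab_{j_k}$ by \eqref{basicIP}. Pairing with $\wb$ gives equal weights. Each monomial attains the maximal weight $rb$ precisely when all its variables lie in $S$, so the claim follows. Consequently, for any binomial $g\in I_P$, either $\varphi(g)=g\in I_Q$ (with the same degree) or $\varphi(g)=0$.

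\textbf{Step 2 (parts (i) and (ii)).} Let $G$ be a set of binomial generators of $I_P$. For $f\in I_Q$ write $f=\sum h_ig_i$ with $g_i\in G$. Applying $\varphi$ and using $\varphi(f)=f$ gives $f=\sum\varphi(h_i)\varphi(g_i)$. By Step 1, $\varphi(G)\setminus\{0\}\subseteq G\cap I_Q$, so this subset generates $I_Q$. If $G$ is quadratic, then so is $\varphi(G)\setminus\{0\}$, which proves (i). For (ii), take a binomial $f\in I_Q\cap K[x_1,\dots,x_\ell]$. Since $K[x_1,\dots,x_\ell]\subseteq K[x_1,\dots,x_m]$, the hypothesis writes $f$ as a combination of quadratic binomials of $I_P$. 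Applying $\varphi$ yields a combination of quadratic binomials of $I_Q$.

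\textbf{Step 3 (parts (iii) and (iv)).} Let $G$ be any binomial generating system of $I_P$. By Step 2, $\varphi(G)\setminus\{0\}$ is a binomial generating system of $I_Q$, and each of its elements equals an element of $G$. If $m$ is an indispensable monomial of $I_Q$, some $\varphi(g)=g$ with $g\in G$ contains $m$. Since $G$ was arbitrary, $m$ is indispensable for $I_P$. Similarly, if $f$ is an indispensable binomial of $I_Q$, then $\pm f\in\varphi(G)\setminus\{0\}\subseteq G$, so $f$ is indispensable for $I_P$.

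The only substantive point is the ``both or neither'' property in Step 1; everything else is formal bookkeeping with $\varphi$. One subtlety to be careful about is that $I_Q$ is defined using all lattice points of $Q$, not only its vertices. This is why $S$ must consist of the lattice points of $P$ lying on the supporting hyperplane, and the same supporting-hyperplane argument handles them.
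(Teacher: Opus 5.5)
Your proof is correct and follows the paper's argument. It uses the same retraction onto $K[x_k : \ab_k \in Q]$ and the same supporting-hyperplane observation that each binomial of $I_P$ has both monomials in this subring or neither, applied to an arbitrary binomial generating set to obtain (ii)--(iv). The only differences are that you derive (i) directly from this retraction, whereas the paper cites \cite[Theorem 1.3]{cpureR1}, and that you state the ``both or neither'' property for binomials of every degree, which the paper's ``same argument'' for (iii)--(iv) tacitly requires.
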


\begin{proof}
    Part (i) follows from the theory of combinatorial pure subrings \cite[Theorem 1.3]{cpureR1}.
Let $\{\ab_1,\dots, \ab_n\}$ be the set of lattice points in $P$
and let
\[
\rho:K[x_1,\ldots,x_n]\longrightarrow R:=K[x_k : \ab_k\in Q]
\]
be the \(K\)-algebra homomorphism defined by
\[
\rho(x_k)=
\begin{cases}
x_k & \text{if } \ab_k\in Q,\\
0 & \text{if } \ab_k\notin Q.
\end{cases}
\]

For (ii), let \(f\in I_Q\cap K[x_1,\ldots,x_\ell]\). 
Then
\(f\in I_P\cap K[x_1,\ldots,x_m]\), and hence, by the assumption,
\begin{equation} \label{gene equ}
    f=\sum_{\mu=1}^s m_\mu f_\mu,
\end{equation}
where each \(f_\mu=x_{\alpha_\mu} x_{\beta_\mu} - x_{\gamma_\mu} x_{\delta_\mu} \)
is a quadratic binomial in \(I_P\).
Since \(Q\) is a face of \(P\), for each  \(f_\mu=x_{\alpha_\mu} x_{\beta_\mu} - x_{\gamma_\mu} x_{\delta_\mu}\in I_P\),
we have
\[
x_{\alpha_\mu} x_{\beta_\mu}\in R \quad \Longleftrightarrow \quad   x_{\gamma_\mu} x_{\delta_\mu}\in R.
\]
In fact, this follows by taking the inner product of $\ab_{\alpha_\mu} +\ab_{\beta_\mu} =\ab_{\gamma_\mu} +\ab_{\delta_\mu}$ with a supporting vector of the face \(Q\).
Thus \(\rho(f_\mu)=f_\mu\) if \(f_\mu\in I_Q\), and  \(\rho(f_\mu)=0\)
if \(f_\mu\notin I_Q\).
Since \(f\in R\), applying \(\rho\) to  \eqref{gene equ} gives
\[
f=\rho(f)
=\sum_{\mu=1}^s \rho(m_\mu)\rho(f_\mu)
=\sum_{f_\mu \in I_Q} \rho(m_\mu) f_\mu.
\]
Hence \(f\) is generated by quadratic binomials in \(I_Q\).

For (iii) and (iv), let $\mathcal{G}$ be an arbitrary system of binomial generators of $I_P$. By the same argument as in the proof of (ii), the set $\mathcal{G}_Q = \mathcal{G} \cap R$ is a system of binomial generators of $I_Q$.

Let $u$ be an indispensable monomial of $I_Q$. Since $\mathcal{G}_Q$ is a system of binomial generators of $I_Q$, there exists a binomial in $\mathcal{G}_Q$ having $u$ as one of its monomials. Hence $u$ appears in a binomial belonging to $\mathcal{G}$. Since $\mathcal{G}$ was arbitrary, $u$ is an indispensable monomial of $I_P$. This proves (iii). 

Let $f$ be an indispensable binomial of $I_Q$. Since $\mathcal{G}_Q$ is a system of binomial generators of $I_Q$, either $f$ or $-f$ belongs to $\mathcal{G}_Q$, and hence to $\mathcal{G}$. Since $\mathcal{G}$ was arbitrary, $f$ is an indispensable binomial of $I_P$. This proves (iv).
\end{proof}

Theorem \ref{quadGENE} follows from the more general fact stated below.

\begin{thm}\label{more general fact}
Suppose that  $P \subset \RR^d$ is a lattice polytope 
such that every edge of $P$ is primitive.
Let $\{\ab_1,\ldots,\ab_m\}$ be the vertex set of $P$ and let
$P \cap \ZZ^d = \{\ab_1,\ldots,\ab_n\}$ with $m \le n$.
If every binomial in $I_P \cap K[x_1,\dots,x_m]$ is generated by quadratic binomials in $I_P$, then any clique of ${\rm sk}(P)$ is the vertex set of a face of $P$.
\end{thm}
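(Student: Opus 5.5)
Let $C$ be a clique of ${\rm sk}(P)$; I argue by induction on $\dim P$ (or equivalently on $|C|$) and by contradiction. If $C$ lies in a proper face $F$ of $P$, then $F$ has primitive edges, ${\rm sk}(F)$ is the induced subgraph of ${\rm sk}(P)$ on $V(F)$, and by Lemma \ref{cp}(ii) the hypothesis passes to $F$. Induction then gives that $C$ is the vertex set of a face of $F$, hence of $P$. So I may assume that $C$ lies in no proper face, and I must show $C=V(P)$. The conclusion can be restated as: $C$ is affinely independent with ${\rm conv}(C)$ a face. If $C$ lies in no proper face and $C=V(P)$, we are done. So assume a vertex $\ab_k\notin C$ exists.

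The central idea is to produce a binomial in $I_P\cap K[x_1,\dots,x_m]$ one of whose monomials is a product of variables indexed by $C$. For this I first need an affine dependence among the vertices. Since ${\rm conv}(C)$ is not a face, some point of ${\rm conv}(C)$ lies in a face that is not spanned by $C$. More concretely, if $C$ is affinely independent but meets the interior relative to the minimal face $P$, then ${\rm conv}(C)$ meets the relative interior of $P$. Combined with $\ab_k\notin C$, Carath\'eodory/Radon-type arguments give a rational relation
\[ \sum_{i\in C}\lambda_i\ab_i=\sum_{j\notin C}\mu_j\ab_j \]
between vertices, with positive coefficients and equal total weights, $\sum\lambda_i=\sum\mu_j$. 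If $C$ is affinely dependent, Radon's theorem gives a relation between two disjoint subsets of $C$ instead. Clearing denominators yields a nonzero binomial $f=u-v\in I_P\cap K[x_1,\dots,x_m]$, where $u$ is supported on $C$ and $u,v$ have disjoint supports.

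By hypothesis $f$ is a combination of quadratic binomials of $I_P$. Some quadratic binomial $x_ix_j-x_kx_\ell$ used in it must have $x_ix_j$ dividing $u$, since otherwise $u$ could not be moved in the fiber. To make this precise I use the standard fiber-connectivity fact: $u$ and $v$ are connected by moves that replace a quadratic monomial divisor. The first move from $u$ replaces some $x_ix_j$ with $x_ix_j\mid u$. If $i\neq j$, then $i,j\in C$ are adjacent in ${\rm sk}(P)$, contradicting Lemma \ref{key}. If $i=j$, then $2\ab_i=\ab_k+\ab_\ell$, which is impossible for a vertex $\ab_i$ unless $k=\ell=i$. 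This contradiction finishes the proof.

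The main obstacle is the middle step: producing a relation whose one side is supported entirely on $C$ while the binomial stays inside $K[x_1,\dots,x_m]$, meaning only vertices may appear on the other side. I would try the following. If $C$ is affinely independent and spans $\dim P$ (forced by the minimality reduction), then $\ab_k$ lies in the affine hull of $C$, so $\ab_k$ is an affine combination of $C$. Moving the negative coefficients to the other side gives a relation in which one side is a positive combination of a subset $C'\subseteq C$ alone, and the other side involves $\ab_k$ and the rest of $C$. The subtle point is that the first side may involve only a single vertex, $\ab_i$ with weight $1$. This happens when $\ab_k$ lies on the "far" side of every facet direction, and such a relation gives a monomial $x_i^N$, which my argument does not handle directly. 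In that case I would instead choose $\ab_k$ and the relation via a point of ${\rm conv}(C)$ interior to $P$, using $|C|\ge 2$, so that at least two distinct elements of $C$ carry positive weight.
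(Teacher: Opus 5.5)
\textbf{There is a genuine gap: you never construct the binomial, and both concrete routes you offer fail.} Your outer structure matches the paper's. You reduce, via Lemma~\ref{cp}(ii) and induction on $\dim P$, to a clique $C$ lying in no facet. You then want a nonzero binomial $u-v\in I_P\cap K[x_1,\dots,x_m]$ with $u$ supported on $C$. Quadratic generation then forces a monomial $x_ix_j$ of some nonzero quadratic binomial to divide $u$, and Lemma~\ref{key} (for $i\neq j$) or the vertex argument (for $i=j$) gives the contradiction. That final part is correct. The gap is exactly the middle step you flag.

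\emph{First route.} The relation $\sum_{i\in C}\lambda_i\ab_i=\sum_{j\notin C}\mu_j\ab_j$ does not follow from Carath\'eodory or Radon, and it is false in general. Take a bipyramid over a triangle $abc$ with apices $d,e$, where the segment $de$ crosses the triangle. Then $C=\{a,b,c,d\}$ is a clique, since $de$ is the only non-edge. Its convex hull is not a face, yet $e\notin{\rm conv}(C)$, so no such relation exists.

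\emph{Fallback route.} It assumes the reduction forces $C$ to affinely span $P$, which is false. In the same bipyramid, $\{a,b,c\}$ lies in no proper face but spans only a plane. Now take a bipyramid in $\RR^4$ over this bipyramid, with the new apex segment crossing the interior of the tetrahedron $abce$. There $\{a,b,c,d\}$ is a clique lying in no facet, it does not span, and ${\rm conv}(C)$ is disjoint from ${\rm conv}$ of the remaining vertices. Neither of your constructions applies to it.

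\emph{The $x_i^N$ worry.} It is unfounded. Such a relation (with disjoint supports) would write the vertex $\ab_i$ as a convex combination of other vertices, and in any case the $i=j$ case you already treated disposes of it. Your last sentence points in the right direction but is not carried out. Its stated aim, getting two distinct elements of $C$ with positive weight, is beside the point.

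\textbf{The missing idea: do not insist on disjoint supports.} Since ${\rm conv}(C)$ lies in no facet, it contains a rational point $v$ in the relative interior of $P$. Such a point can be written as $v=\sum_{\ell=1}^m\mu_\ell\ab_\ell$ with \emph{all} $\mu_\ell>0$ rational. Clearing denominators in $\sum_k\lambda_k\ab_{i_k}=\sum_{\ell=1}^m\mu_\ell\ab_\ell$ gives
$f=\prod_k x_{i_k}^{\lambda'_k}-\prod_{\ell=1}^m x_\ell^{\mu'_\ell}\in I_P\cap K[x_1,\dots,x_m]$.
This $f$ is nonzero because some vertex lies outside $C$ and appears on the right with positive exponent. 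This is the paper's argument, and with it the rest of your proof goes through unchanged.
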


\begin{proof}
Let $\{\ab_{i_1} ,\dots,  \ab_{i_r}\} \subset \{\ab_1 ,\dots,  \ab_m\}$ be a clique 
of ${\rm sk}(P)$.
Suppose that 
$P'= {\rm conv}(\ab_{i_1} ,\dots,  \ab_{i_r})$ 
is not a face of $P$.
If $r=m$,
then 
$P=P'$,
a contradiction.
Hence $r<m$.

The proof is by induction on the dimension of $P$.
If $\dim P \le 2$, then $P$ satisfies the clique-face property ($*$).
Let $d=\dim P \ge 3$ and assume that the assertion holds for lattice polytopes of dimension $\le d-1$.
If $P'$ is contained in a facet $F$ of $P$,
then we can replace $P$ with $F$ by Lemma~\ref{cp}.
Thus we may assume that $P'$ is not contained in any facet of $P$.
Then $P'$ contains an inner point $v \in \QQ^d$ of $P$.
It then follows that
\[
v = \sum_{k=1}^r \lambda_k \ab_{i_k}
=\sum_{\ell=1}^m \mu_\ell \ab_\ell,
\]
for some $0 \leq \lambda_k \in \QQ$, $0 < \mu_\ell \in \QQ$,
and $\sum_{k=1}^r \lambda_k = \sum_{\ell=1}^m \mu_\ell = 1$.
Multiplying a suitable integer,
we have
\[
\sum_{k=1}^r \lambda_k' \ab_{i_k}
=\sum_{\ell=1}^m \mu_\ell' \ab_\ell,
\]
where 
$0 \leq \lambda_k' \in \ZZ$, $0 < \mu_\ell' \in \ZZ$,
and $\sum_{k=1}^r \lambda_k' = \sum_{\ell=1}^m \mu_\ell'$.
From (\ref{basicIP}),
    $$
    f:=
\prod_{k=1}^r x_{i_k}^{ \lambda_k'}
-
\prod_{\ell=1}^m x_\ell^{\mu_\ell'}  
$$
belongs to $I_P \cap K[x_1,\dots,x_m]$.
Since $r<m$ and $0 < \mu_\ell' \in \ZZ$ for $\ell = 1,2 \dots, m$, we have $f \ne 0$.
In addition, since the binomial $f$ is generated by quadratic binomials in $I_P$, there exists a binomial
$g:=x_i x_j - x_k x_\ell \in I_P$ 
such that $x_i x_j$ divides $\prod_{k=1}^r x_{i_k}^{ \lambda_k'}$.
Then $i,j \in \{i_1,\dots,i_r\}$
and $\ab_i + \ab_j = \ab_k + \ab_\ell$.
If $i = j$, then  $\ab_i  = \frac{1}{2}( \ab_k + \ab_\ell)$.
Since $\ab_i$ is a vertex of $P$, we have $i =k = \ell$.
Then $g = 0$, a contradiction.
Hence $i \ne j$.
From Lemma \ref{key},  ${\rm conv}(\ab_i , \ab_j)$ is not an edge of $P$.
This contradicts that
$\{\ab_{i_1} ,\dots,  \ab_{i_r}\}$ is a clique of ${\rm sk}(P)$.
\end{proof}

\section{$(0,1)$-polytopes satisfying (E)}

The converse of Theorem \ref{quadGENE} does not hold even in the two-dimensional case.

\begin{example}\label{counterexam}
Let 
    $P={\rm conv} (\{(1,0,1),(0,1,1),(-1,-1,1), (0,0,1)\})$.
Then $P$ is a triangle such that every edge is primitive.
On the other hand, $I_P$ is generated by $x_1 x_2 x_3 - x_4^3$.
    Thus $P$ is a counterexample of the converse of Theorem \ref{quadGENE}.

\end{example}

However, the following fact shows that this example is exceptional.

\begin{prop}[{\cite[Corollary 3.2.5]{BGT}}]
Let $P \subset \RR^2$ be a lattice polytope with $|P \cap \ZZ^2| \ge 4$.
Then the following conditions are equivalent:
\begin{itemize}
    \item[{\rm (i)}] $|\partial P \cap \ZZ^2| \ge 4${\rm ;}
    \item[{\rm (ii)}] $I_P$ has a quadratic Gr\"{o}bner basis{\rm ;}
    \item[{\rm (iii)}] the toric ring of $P$ is Koszul{\rm ;}
    \item[{\rm (iv)}] $I_P$ is quadratic.
\end{itemize}
    
\end{prop}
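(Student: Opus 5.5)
The plan is to prove the cycle (ii)$\Rightarrow$(iii)$\Rightarrow$(iv)$\Rightarrow$(i)$\Rightarrow$(ii).

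\textbf{The standard implications.} The implications (ii)$\Rightarrow$(iii) and (iii)$\Rightarrow$(iv) hold for any graded toric ring and need nothing specific to polygons. A quadratic Gr\"obner basis gives Koszulness, by the standard deformation argument to a quadratic monomial ideal. Koszulness forces the defining ideal to be generated in degree $2$.

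\textbf{(iv)$\Rightarrow$(i).} I argue by contradiction. Suppose $|\partial P\cap\ZZ^2|\le 3$. Then $P$ is a lattice triangle with vertices $\ab_1,\ab_2,\ab_3$, all of whose edges are primitive. Since $|P\cap\ZZ^2|\ge 4$, $P$ has an interior lattice point $\ab_p$.

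First, every degree-$2$ monomial in the vertex variables $x_1,x_2,x_3$ is the only monomial in its fiber:
\begin{itemize}
\item a monomial $x_ix_j$ with $i\ne j$ cannot occur in a nonzero quadratic binomial of $I_P$, by Lemma~\ref{key};
\item a monomial $x_i^2$ cannot occur either, because $2\ab_i=\ab_k+\ab_\ell$ forces $k=\ell=i$, as $\ab_i$ is a vertex.
\end{itemize}

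Next, write $\ab_p$ as a convex combination of the vertices with strictly positive rational coefficients. Clearing denominators gives a nonzero binomial
\[
x_p^N-x_1^{c_1}x_2^{c_2}x_3^{c_3}\in I_P, \qquad c_i>0 .
\]
If $I_P$ were generated by quadratic binomials, then $x_1^{c_1}x_2^{c_2}x_3^{c_3}$ would have to be divisible by a monomial of some nonzero quadratic binomial of $I_P$. This is the same divisibility step as in the proof of Theorem~\ref{more general fact}. Such a divisor is a degree-$2$ monomial in $x_1,x_2,x_3$, which contradicts the previous paragraph. Hence $|\partial P\cap\ZZ^2|\ge 4$.

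\textbf{(i)$\Rightarrow$(ii).} This is the main obstacle. I would aim to construct a regular unimodular triangulation $\Delta$ of $P$ that uses all lattice points and is flag, meaning every set of lattice points pairwise joined by edges of $\Delta$ spans a cell of $\Delta$.

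The construction I would try is an induction on $|P\cap\ZZ^2|$.
\begin{itemize}
\item Since $|\partial P\cap\ZZ^2|\ge 4$, choose a boundary lattice point $\ab$ whose removal leaves a polygon $P'={\rm conv}((P\cap\ZZ^2)\setminus\{\ab\})$ that still has at least $4$ boundary lattice points, unless $P$ is already small enough to handle directly.
\item Extend a regular flag unimodular triangulation of $P'$ by coning $\ab$ over the visible boundary chain of $P'$. This is a placing step, so regularity is preserved.
\item Check that no new empty triangle appears. An empty triangle could only be formed with $\ab$ and two visible points that are not consecutive on the chain. I expect this to be excluded because the visible chain is convex and has at least two edges.
\end{itemize}
The delicate part is the degenerate base cases, such as polygons with exactly $4$ boundary points and many interior points, where the choice of $\ab$ must be made with care.

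Given such a $\Delta$, Sturmfels' correspondence identifies the initial ideal of $I_P$ for the corresponding weight with the Stanley--Reisner ideal of $\Delta$. Flagness makes this initial ideal generated by squarefree quadratic monomials, and so $I_P$ has a quadratic Gr\"obner basis.
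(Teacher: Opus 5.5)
The paper does not prove this statement; it quotes it from \cite[Corollary 3.2.5]{BGT}. Your implications (ii)$\Rightarrow$(iii)$\Rightarrow$(iv) are the standard ones. Your proof of (iv)$\Rightarrow$(i) is correct: it is the divisibility argument of Theorem~\ref{more general fact}, applied to a triangle with primitive edges and an interior lattice point, which is exactly the situation of Example~\ref{counterexam}. The gap is in (i)$\Rightarrow$(ii), which is the real content of the cited result. Reducing it, via Sturmfels' correspondence, to the existence of a regular unimodular flag triangulation is the right framework. However, as you acknowledge, you do not actually construct such a triangulation, and the one concrete justification you give for the inductive step is false.

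In your coning step, a pairwise adjacent triple that is not a cell arises exactly when the triangulation $\Delta'$ of $P'$ has an edge joining two non-consecutive points of the chain visible from $\ab$. Convexity of the chain does not rule this out. Take $P={\rm conv}\{(0,0),(1,0),(2,2),(0,1)\}$, which has four boundary lattice points and the single interior point $(1,1)$, and remove $\ab=(2,2)$. Then $P'=[0,1]^2$ still has four boundary lattice points, as you require. Triangulate $P'$ by the diagonal from $(1,0)$ to $(0,1)$; this triangulation is regular, unimodular and flag. The visible chain is $(1,0),(1,1),(0,1)$. After coning, the triple $\{(2,2),(1,0),(0,1)\}$ is pairwise adjacent, but its triangle contains $(1,1)$ in its interior, so the result is not flag. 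The other diagonal of $P'$ would have worked, which shows that ``some regular flag unimodular triangulation of $P'$'' is too weak an inductive hypothesis.

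To close the gap you need three things. First, show that $\Delta'$ can always be chosen, or repaired by flips preserving regularity and flagness, so that no edge joins non-consecutive visible points. Second, say how $\ab$ is chosen; it must be a vertex of $P$, since otherwise $P'=P$. Third, settle the base cases you leave open. Until this is done, (i)$\Rightarrow$(ii), and hence the equivalence, is not established.
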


We now prove Theorem \ref{indispensable} that states that the clique-face property ($*$) and property (E) guarantee 
a necessary condition for quadratic generation of toric ideals.
    
\begin{proof}[Proof of Theorem \ref{indispensable}]
``If" part follows from the argument in the proof of Theorem \ref{more general fact}.

Let $P$ be a $(0,1)$-polytope with properties (E) and ($*$)
and let $P \cap \ZZ^d = \{\ab_1,\dots,\ab_n\}$.
Since $P$ is a $(0,1)$-polytope, $P \cap \ZZ^d$ coincides with the set of vertices of $P$.
    Suppose that $m=x_{i_1} \dots x_{i_r}$ ($r \ge 3$) appears in a 
    binomial $f=m -x_{j_1} \dots x_{j_r}$
    $(\ne 0)$ in $I_P$ and that 
    $m$ is not divisible by any monomial appearing in any quadratic binomial in $I_P$.
    Let $C_1:=\{\ab_{i_1}, \dots, \ab_{i_r}\}$ and $C_2:=\{\ab_{j_1}, \dots, \ab_{j_r}\}$.
Since $f$ is nonzero, $C_1$ and $C_2$
    are different as multisets.   
Suppose that ${\rm conv}(\{\ab_\alpha, \ab_\beta\})$ with $\ab_\alpha, \ab_\beta \in C_1$
and $\ab_\alpha \ne \ab_\beta$
is not an edge of $P$.
Since $P$ satisfies (E),
$\ab_\alpha+\ab_\beta = \ab_p + \ab_q$
for some $\{\alpha, \beta\} \ne \{p,q\}$.
Then 
the squarefree quadratic binomial
$x_\alpha x_\beta - x_p x_q$ belongs to $I_P$.
This contradicts the hypothesis for $m$.
Thus $C_1$ is a clique of ${\rm sk}(P)$.
Hence it is enough to show that  
${\rm conv}(C_1)$ is not a face of $P$.
Suppose that ${\rm conv}(C_1)$ is a face of $P$.
By the observation in the Introduction, it is a simplex.
    
    Since $f$ belongs to $I_P$, we have 
    \[\sum_{k=1}^r \ab_{i_k} = \sum_{k=1}^r \ab_{j_k},\]
    and hence 
\begin{equation}\label{jushin}
\sum_{k=1}^r \frac{1}{r} \ab_{i_k} = \sum_{k=1}^r \frac{1}{r} \ab_{j_k}.
\end{equation}
Since ${\rm conv}(C_1)$ is a face of $P$,
there exists a vector $\wb \in \RR^d$ and $b \in \RR$ such that
\[
\wb \cdot \ab_i 
\begin{cases}
< b &  \mbox{if } \ab_i \in (P\cap \ZZ^d ) \setminus C_1,\\
= b & \mbox{if } \ab_i \in C_1.
\end{cases}
\]
Taking the inner product $\wb$ with (\ref{jushin}),
we have 
$\sum_{k=1}^r \frac{1}{r} \wb \cdot \ab_{j_k} = b$.
It then follows that $C_2 \subset C_1$. 
Then $f$ is a binomial in $I_{{\rm conv}(C_1)}$.
However, since ${\rm conv}(C_1)$ is a simplex, we have 
$I_{{\rm conv}(C_1)}= \{0\}$, a contradiction.
\end{proof}

\subsection{Edge polytopes}

Let $G$ be a simple graph on the vertex set $[d]:= \{1,2,\dots,d\}$
with the edge set $E(G)=\{e_1,\dots, e_n \}$.
Then the {\em edge polytope} $\epoly$ of $G$ is the convex hull of
\[
\{
\eb_i + \eb_j : \{i,j\} \in E(G)
\},
\]
where $\eb_i \in \RR^d$ is a unit coordinate vector.
Numerous studies on edge polytopes have explored in detail
 from both algebraic and discrete geometric perspectives.
See the textbooks \cite{HHO, V} and the references therein.
In particular, a graph-theoretical characterization of systems of binomial generators of the toric ideal $I_{\epoly}$ is known \cite{Vecw}.
A {\it walk} of length $q$ of $G$ from $v_1 \in V(G)$ to $v_{q+1} \in V(G)$ is a sequence of the form 
\begin{equation} \label{ecw}
    \Gamma =(\{v_1,v_2\}, \{v_2,v_3\}, \dots,\{v_q,v_{q+1}\})
\end{equation}
with each $\{v_k,v_{k+1} \} \in E(G)$.
A walk $\Gamma$ of the form \eqref{ecw} is called {\it even} (resp. {\it odd}) if $q$ is even  (resp. odd).
A walk $\Gamma$ of the form \eqref{ecw} is called 
{\it closed} if $v_{q+1} = v_1$.
Given an even closed walk $    \Gamma =(e_{i_1}, \dots, e_{i_{2q}})$ of $G$, let
   $$
    f_\Gamma := \prod_{k=1}^q x_{i_{2k-1}} - \prod_{k=1}^q x_{i_{2k}}
    .$$
It is easy to see that $f_\Gamma$ belongs to $I_{\epoly}$.
The most important fact is the following.

\begin{prop}[{\cite[Proposition 3.1]{Vecw}}]
    The toric ideal $I_{\epoly}$ is generated by 
    $$\{ f_\Gamma : \Gamma \mbox{ is an even closed walk of }G \}.$$
\end{prop}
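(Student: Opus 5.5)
The statement is cited from \cite[Proposition 3.1]{Vecw}, so no new proof is needed. If one wants a self-contained argument, I would proceed as follows.

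\emph{Membership.} Take an even closed walk $\Gamma=(e_{i_1},\dots,e_{i_{2q}})$ with vertex sequence $v_1,\dots,v_{2q},v_{2q+1}=v_1$. Each vertex occurrence $v_k$ is incident to the consecutive edges $e_{i_{k-1}}$ and $e_{i_k}$ (indices mod $2q$). Exactly one of these has odd position and one has even position. Hence
\[
\sum_k (\eb_{i_{2k-1}}\text{-vector}) = \sum_k \text{(endpoint sums)} = \sum_{k=1}^{2q} \eb_{v_k},
\]
and the odd and even sides give the same vector. By \eqref{basicIP}, $f_\Gamma\in I_{\epoly}$.

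\emph{Generation.} Since $I_{\epoly}$ is spanned by binomials $u-v$ with $\pi(u)=\pi(v)$, it suffices to show that every such $u-v$ lies in the ideal $J$ generated by the $f_\Gamma$. I would argue by induction on $\deg u$, and may assume $\gcd(u,v)=1$ after factoring out common variables. Write $u=x_{a_1}\cdots x_{a_q}$ and $v=x_{b_1}\cdots x_{b_q}$. Equality of images means every vertex of $G$ has the same degree in the multigraph $U$ of the edges of $u$ as in the multigraph $V$ of the edges of $v$.

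Now build an alternating closed walk. Start with an edge $e_{a_1}=\{w_0,w_1\}$ of $U$. At $w_1$, the $V$-degree equals the $U$-degree, which is at least $1$, so there is an unused $V$-edge at $w_1$; follow it. Continue alternating between $U$-edges and $V$-edges. Degree balance guarantees that an unused edge of the required colour always exists until the walk returns to $w_0$ at a step where closing keeps the alternation. This is an even closed walk $\Gamma$ whose odd edges form a submonomial $u'\mid u$ and whose even edges form $v'\mid v$.

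If $\Gamma$ uses all edges, then $u-v=f_\Gamma$. Otherwise write $u=u'u''$. Then
\[
u-v = u''f_\Gamma + (u''v' - v),
\]
and $u''v'-v$ lies in $I_{\epoly}$. Dividing out the common factor $v'$ gives $u''-v/v'$, of smaller degree, which lies in $J$ by induction.

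\emph{Main obstacle.} The delicate point is the walk construction. One must check that the alternating walk can indeed be closed at even length. I would handle this by choosing a maximal alternating trail and using the degree-parity count at its final vertex. An alternative is the bipartite double cover trick: replace each vertex $w$ by $w^+,w^-$, so that $U$-edges go from $+$ to $-$ and $V$-edges from $-$ to $+$. The difference is then a balanced directed multigraph, which decomposes into directed cycles. These project to even closed walks, and this gives the decomposition directly.
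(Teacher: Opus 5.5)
Your main argument is correct. The paper gives no proof here and simply cites \cite[Proposition 3.1]{Vecw}; your sketch (an alternating $U$/$V$ trail closed up by the degree count at its endpoint, then peeling off $u''f_\Gamma$ and inducting on degree) is the standard proof of that result.

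Drop the double-cover alternative, though. Lifting each edge once need not give a balanced digraph, and lifting both orientations doubles multiplicities, so a directed cycle can project to a walk that uses a multiplicity-one edge of $u$ twice. For example, with $u=x_{ab}x_{ac}x_{bd}x_{pq}x_{rs}$ and $v=x_{ap}x_{aq}x_{br}x_{bs}x_{cd}$, the lifted cycle $a^+\to b^-\to r^+\to s^-\to b^+\to a^-\to p^+\to q^-\to a^+$ uses $ab$ twice. Then $u'\nmid u$ breaks your inductive step.
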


More precisely, we have the following (\cite[Lemmas 5.10 and 5.11]{HHO}).

\begin{prop}\label{edge gene}
    The toric ideal $I_{\epoly}$ is generated by all $f_\Gamma$ such that $\Gamma$ is one of the following{\rm :}
    \begin{itemize}
        \item[(i)]
        $\Gamma$ is an even cycle of $G${\rm ;}
        \item[(ii)]
        $\Gamma=(C_1,C_2)$ where $C_1$ and $C_2$ are odd cycles of $G$ having exactly one common vertex{\rm ;}
        \item[(iii)]
        $\Gamma=(C_1,\Gamma_1,C_2,\Gamma_2)$ where $C_1$ and $C_2$ are odd cycles of $G$ without common vertices and where $\Gamma_1$ and $\Gamma_2$ are walks of $G$ such that $\Gamma_1$ combines $i \in V(C_1)$ with $j \in V(C_2)$
        and $\Gamma_2$ combines $j \in V(C_2)$ with $i \in V(C_1)$. 
    \end{itemize}

\end{prop}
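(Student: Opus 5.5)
We would start from the previous proposition, by which $I_{\epoly}$ is generated by the $f_\Gamma$ for all even closed walks $\Gamma$. It then suffices to show that each such $f_\Gamma$ lies in the ideal $J$ generated by the $f_\Gamma$ of types (i)--(iii). We argue by induction on the length $2q$ of $\Gamma$. Cleaner still, since a toric ideal is generated by its primitive binomials (its Graver basis), we may assume $f_\Gamma$ is primitive. Here primitive means there is no other binomial $u-v\in I_{\epoly}$ with $u$ dividing the first monomial of $f_\Gamma$ and $v$ dividing the second. We then show that a primitive $f_\Gamma$ comes from a walk of type (i), (ii) or (iii).

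The basic reduction is the following. Write $\Gamma=(e_{i_1},\dots,e_{i_{2q}})$ with vertices $v_1,\dots,v_{2q},v_{2q+1}=v_1$, and suppose $v_a=v_b$ for some $a<b$ with $(a,b)\ne(1,2q+1)$. If $b-a$ is even, then $\Gamma$ splits into two shorter even closed walks. The first is $\Gamma'=(e_{i_a},\dots,e_{i_{b-1}})$. The second, $\Gamma''$, is the remaining edges of $\Gamma$ in order. The parity of the edge positions is preserved in both pieces, so the two monomials of $f_{\Gamma'}$ divide the corresponding monomials of $f_\Gamma$. This contradicts primitivity. Equivalently, one writes
\[
f_\Gamma = m\, f_{\Gamma'} \pm m'\, f_{\Gamma''}
\]
for suitable monomials $m,m'$ and applies induction. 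Hence in a primitive $\Gamma$, every repeated vertex occurs at positions of odd distance.

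Next we handle the repetitions. If $\Gamma$ has no repeated vertex, it is an even cycle, which is type (i). Otherwise, pick a repetition $v_a=v_b$ with $b-a$ odd and $b-a$ minimal. Then $\Gamma$ decomposes at this vertex into two odd closed walks. Any odd closed walk with a repeated vertex contains an odd-distance repetition splitting it into a shorter odd closed walk, since an even-length piece would contradict primitivity as before. Iterating, each of the two odd closed walks reduces to an odd cycle $C_1$ or $C_2$ attached along walks $\Gamma_1,\Gamma_2$. If $C_1$ and $C_2$ share exactly one vertex, we get type (ii). If they are vertex-disjoint, we get type (iii).

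The remaining cases are where $C_1,C_2$ share at least two vertices, or where the connecting walks carry extra structure not accounted for. In these cases we exhibit an even cycle, or a shorter configuration, inside the union of the edges. Its binomial has monomials dividing those of $f_\Gamma$, and this contradicts primitivity.

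The main obstacle is this last step: checking that the parity bookkeeping really places the monomials of the shorter binomial inside the correct monomials of $f_\Gamma$ (including the possibility that an edge is used twice). For two odd cycles meeting in two or more vertices, we would first try the following. Take two consecutive common vertices along $C_2$; these split $C_1$ into two paths of opposite parity. Choosing the appropriate one together with the arc of $C_2$ yields an even cycle. We then verify the divisibility directly from the positions of its edges in $\Gamma$.
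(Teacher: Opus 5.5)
Your reduction to the case where every repeated vertex of $\Gamma$ occurs at odd distance is fine, provided you run it through the identity $f_\Gamma=m f_{\Gamma'}\pm m' f_{\Gamma''}$ rather than through primitivity, since $f_{\Gamma'}$ may be $0$. It also gives that each vertex occurs at most twice among $v_1,\dots,v_{2q}$, because three occurrences cannot be pairwise at odd distance.

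The gap is everything after you extract $C_1$ and $C_2$, which is where the content of the statement lies.

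First, the claim ``if $C_1$ and $C_2$ share exactly one vertex, we get type (ii)'' is false when the connecting walks are nonempty, since type (ii) means $\Gamma=(C_1,C_2)$. Take distinct vertices $x,w,p,y,r,z,z'$ and the closed walk with vertex sequence $x,w,p,x,z,y,w,r,y,z',x$. All repetitions are at odd distance. Your procedure extracts the triangles $(x,w,p)$ and $(y,w,r)$, which meet only in $w$ and are joined by $(x,z,y)$ and $(y,z',x)$. This $\Gamma$ is of none of the types (i)--(iii). Its binomial has to be obtained from, e.g., the $4$-cycle $(x,w,y,z)$, which mixes edges of $C_1$, $C_2$ and $\Gamma_1$, together with the complementary $6$-cycle $(p,x,z',y,r,w)$.

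Second, ``we exhibit an even cycle, or a shorter configuration'' is not an argument, and the mechanism you sketch fails as stated. Choosing the path of $C_1$ by \emph{length} parity gives an even cycle, but not necessarily one whose edges alternate between odd and even positions of $\Gamma$. Take the walk with vertex sequence $x,c_1,c_2,c_3,c_4,x,d_1,d_2,c_3,d_3,c_1,d_4,x$. All repetitions are at odd distance, with $C_1=(x,c_1,c_2,c_3,c_4)$ and $C_2=(x,d_1,d_2,c_3,d_3,c_1,d_4)$. The consecutive common vertices $c_3,c_1$ of $C_2$ give the $4$-cycle $(c_3,d_3,c_1,c_2)$, whose edges sit at positions $9,10,2,3$ of $\Gamma$. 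Each of its monomials contains one odd-position and one even-position edge, so neither divides a monomial of $f_\Gamma$.

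What is missing is control of \emph{position} parity at the junctions. In an arc of $\Gamma$ that is an odd cycle, the two edges at its base point have equal position parity, while the two edges at any other vertex have opposite parities. Let $C_1$ be the arc based at $x$ and $C_2$ the arc based at $y$ (with $y=x$ if $\Gamma=(C_1,C_2)$). Let $u$ be a common vertex other than $x$; in the separated case every common vertex qualifies, and the at-most-twice property gives $u\neq y$. Now build a closed walk $\Delta$:
walk from $x$ along $C_1$ to $u$;
leave $u$ along the edge of $C_2$ whose position parity differs from that of the edge you arrived on;
follow $C_2$ to $y$, arriving with the common parity of its two end edges whichever direction you chose;
if $y\ne x$, return to $x$ along $\Gamma_1$ reversed.
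This $\Delta$ alternates between odd and even positions of $\Gamma$ and is strictly shorter. The leftover edges form another such walk $\Delta'$, so $f_\Gamma=m f_\Delta\pm m' f_{\Delta'}$ and induction applies. This one construction handles both of your leftover cases; without it the argument does not reach types (i)--(iii). The paper itself gives no proof of this statement; it cites \cite[Lemmas 5.10 and 5.11]{HHO}.
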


On the other hand, a graph-theoretical characterization of indispensable binomials in $I_{\epoly}$ is given in \cite[Theorem 4.14]{RTT}.
Since the characterization is complicated, we introduce some partial results.
A {\em chord} of a walk $\Gamma$ in \eqref{ecw} is an edge $e \in E(G) \setminus E(\Gamma)$ of the form $e=\{v_i,v_j\}$ for some $1 \le i < j \le q+1$.
A {\it cycle} is a closed walk 
\begin{equation} \label{evencycleexample}    
    C =(\{v_1,v_2\}, \{v_2,v_3\}, \dots,\{v_q,v_{1}\}),
    \end{equation}
where $v_i \ne v_j$ for all $1 \le i < j \le q$.
When $C$ is even, an {\em even chord} (resp.~{\em odd chord}) of $C$ is a chord $e=\{v_i,v_j\}$
with $1 \le i < j \le q$ such that $j-i$ is odd (resp.~even).
Let $e=\{v_i,v_j\}$ and $e'=\{v_k,v_\ell\}$ be odd chords of an even cycle $C$ in \eqref{evencycleexample}
with $1 \le i < j \le q$ and $1 \le k < \ell \le q$.
We say that $e$ and $e'$ {\em cross} in $C$ if the following conditions are satisfied.
\begin{itemize}
    \item 
    Either $i < k < j < \ell$ or $k < i < \ell < j$; 
    \item 
    Either $\{v_i,v_k\}$, $\{v_j,v_\ell\}$ are edges of $C$
or $\{v_i,v_\ell\}$, $\{v_j,v_k\}$ are edges of $C$.
\end{itemize}
In addition, we say that $e$ and $e'$ {\em cross effectively} in $C$ if the following conditions are satisfied.
\begin{itemize}
    \item 
    Either $i < k < j < \ell$ or $k < i < \ell < j$; 
    \item 
    $k-i$ is odd (and hence all of $k-j$, $\ell-i$, $\ell -j$ are odd).
\end{itemize}
If $C_1$ and $C_2$ are cycles of $G$ with no common vertices, then a {\em bridge} between $C_1$ and $C_2$ is an edge $\{i,j\}$ of $G$ with $i \in V(C_1)$ and $j \in V(C_2)$.

\begin{prop} \label{ep:indispensable}
Let $G$ be a graph and let $\Gamma$ be an even closed walk of $G$.
\begin{itemize}
    \item[(a)] 
    If $\Gamma$ is an even cycle, then $f_\Gamma$ is indispensable
if and only if {\rm (i)} $\Gamma$ has no even chord and {\rm (ii)} $\Gamma$ has no odd chords $e$ and $e'$ which
cross effectively in $\Gamma$ {\rm (}\cite[Theorem 2.3]{OHindispensable}{\rm )}.
\item[(b)]
If $\Gamma$ has no chords and consists of either {\rm (i)} two odd cycles having exactly one common vertex or
{\rm (ii)} two odd cycles with no common vertices and joined by a path, then $f_\Gamma$ is fundamental, and hence indispensable
 {\rm (}\cite[Theorem 2.2]{RTT}{\rm )}.
\end{itemize}
\end{prop}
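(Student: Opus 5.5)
The statement collects two results from the literature, \cite[Theorem 2.3]{OHindispensable} and \cite[Theorem 2.2]{RTT}. To reprove them, I would reduce both parts to a face of $\epoly$ and then use Lemma~\ref{cp}. For $W \subset [d]$, the set $\{\ab \in \epoly : \sum_{i \notin W} a_i = 0\}$ is a face of $\epoly$ and equals the edge polytope of the induced subgraph $G[W]$. I would take $W = V(\Gamma)$. Fibers of $I_{\epoly}$ whose degree is supported on $W$ coincide with fibers of $I_{\mathcal P_{G[W]}}$, by the supporting-hyperplane argument in the proof of Lemma~\ref{cp}. So indispensability of $f_\Gamma$ can be decided inside $G[W]$.

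For (b), since $\Gamma$ has no chords, $G[W]$ has edge set exactly $E(\Gamma)$. In case (i) this graph has $|W|+1$ edges. In case (ii) it also has $|W|+1$ edges: two odd cycles give $|V(C_1)|+|V(C_2)|$ edges, and the path adds one more edge than internal vertices. The graph is connected and non-bipartite, so $\dim \mathcal P_{G[W]} = |W|-1$ and it has $|W|+1$ vertices. Hence $I_{\mathcal P_{G[W]}}$ is a height-one toric ideal, i.e.\ principal, and it contains the nonzero irreducible binomial $f_\Gamma$. Therefore it is generated by $f_\Gamma$, which makes $f_\Gamma$ fundamental and trivially indispensable in $I_{\mathcal P_{G[W]}}$. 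Lemma~\ref{cp}(iv) then lifts indispensability to $I_{\epoly}$.

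For (a), let $C$ be an even cycle with vertex set $W$. The degree of either monomial of $f_C$ is $\sum_{w\in W}\eb_w$. Monomials of this degree correspond bijectively to perfect matchings of $G[W]$. In a positively graded toric ideal, a binomial $u-v$ with $\gcd(u,v)=1$ is indispensable if and only if its fiber is exactly $\{u,v\}$. So $f_C$ is indispensable if and only if $G[W]$ has exactly two perfect matchings, namely the two alternating halves $M_1, M_2$ of $C$. For the ``only if'' direction, an even chord $\{v_i,v_j\}$ splits $C$ into two paths, each with an even number of interior vertices. Matching those interiors by cycle edges and adding the chord gives a third perfect matching. Two effectively crossing odd chords $\{v_i,v_j\}$ and $\{v_k,v_\ell\}$ cut $C$ into four arcs. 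The parity condition on $k-i$ makes each arc's interior even, so the two chords together with alternate cycle edges on the arcs again give a third matching.

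The main obstacle is the converse: if $M$ is a third perfect matching, then an even chord or an effectively crossing pair of odd chords must exist. I would argue by minimality. First, $M\triangle M_1$ is a union of alternating cycles that must use chords. Among all perfect matchings other than $M_1$ and $M_2$, choose $M$ with the fewest chords. If $M$ contains an even chord, we are done. Otherwise every chord in $M$ is odd. Removing the endpoints of a single odd chord leaves an odd arc, which cannot be perfectly matched by cycle edges alone. Hence some chord in $M$ must cross it, and I would show that the crossing can be chosen effective. If it cannot, I would try to reroute $M$ along cycle edges to obtain a matching with fewer chords, contradicting minimality. This parity-and-crossing case analysis is where I expect the real work, and I would follow the arc-decomposition argument of \cite{OHindispensable}.
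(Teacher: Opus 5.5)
The paper gives no proof of this proposition; it quotes \cite[Theorem 2.3]{OHindispensable} and \cite[Theorem 2.2]{RTT}. Your self-contained reduction is correct as far as it goes. It consists of restricting to the face $\mathcal P_{G[W]}$, the height-one argument in (b), the translation of indispensability of $f_C$ into ``$G[W]$ has only the perfect matchings $M_1,M_2$,'' and your constructions of a third perfect matching from an even chord or from an effectively crossing pair. In (b), $f_\Gamma$ is irreducible because its two monomials are squarefree and coprime. This settles (b) and the ``only if'' half of (a).

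The gap is the ``if'' half of (a), which is the real content of \cite[Theorem 2.3]{OHindispensable}. You must show that any perfect matching $M\neq M_1,M_2$ of $G[W]$ forces an even chord or an effectively crossing pair of odd chords, and you only announce this. The mechanism you suggest is also not justified. A chord of $M$ crossing an odd chord $e\in M$ may have its endpoints in the same parity class as those of $e$, so it need not cross $e$ effectively. You give no procedure that reroutes $M$ along cycle edges to remove chords while keeping a perfect matching different from $M_1,M_2$.

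The missing idea is a parity coloring combined with a shortest-arc choice; no minimization over $M$ is needed. Assume $C=(v_1,\dots,v_q)$ has no even chord, and color $v_t$ by the parity of $t$. Since $q$ is even, cycle edges join different colors, while every chord, being odd, joins equal colors. Because $M\not\subset E(C)$, $M$ contains a chord.

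Among all chords $e=\{v_i,v_j\}\in M$ and the two open arcs of $C$ between $v_i$ and $v_j$, choose an arc $A$ of minimum size. Then no chord of $M$ has both endpoints in $A$, since it would cut off a shorter arc. Also $A$ has odd size, because $j-i$ and $q$ are even. Its end vertices are neighbors of $v_i$ or $v_j$ on $C$, so $A$ has one more vertex of the color opposite to $v_i$ than of the color of $v_i$.

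The $M$-edges inside $A$ are cycle edges, each using one vertex of each color, and $v_i,v_j$ are matched to each other. Hence some $v_k\in A$ of the color opposite to $v_i$ is matched to a vertex $v_\ell$ of the other arc. The edge $\{v_k,v_\ell\}$ is not a cycle edge, so it is an odd chord whose endpoints interleave with those of $e$, and $k-i$ is odd. Thus $e$ and $\{v_k,v_\ell\}$ cross effectively, which completes the converse.
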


In addition, a graph-theoretical characterization of quadratic generation of the toric ideal $I_{\epoly}$ is known \cite{OHquad}.

\begin{definition}
We say that a graph $G$ satisfies (Q) if $G$ satisfies the following conditions:
\begin{itemize}
    \item[(i)]
    If $C$ is an even cycle of $G$ of length $\ge 6$, then either 
    $C$ has an even chord or $C$ has three odd chords $e$, $e'$, and $e''$ such that $e $ and $e'$ cross in $C$;
\item[(ii)]
If $C_1$ and $C_2$ are induced odd cycles having exactly one common
 vertex, then there exists an edge $\{i,j\} \notin  E(C_1) \cup E(C_2)$
 with $i \in V(C_1)$ and $j \in V(C_2)$;
\item[(iii)]
If $C_1$ and $C_2$ are induced odd cycles in the same connected component of $G$ having no common vertices,
then there exist at least two bridges between $C_1$ and $C_2$.
  
\end{itemize}
\end{definition}

\begin{prop}[{\cite[Theorem 1.2]{OHquad}}] \label{edgepolyquad}
Let $G$ be a graph. Then the toric ideal $I_{\epoly}$ is quadratic if and only if $G$ satisfies (Q).
\end{prop}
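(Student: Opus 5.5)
The plan is to prove the two directions separately, with $I_{\epoly}$ described through Proposition \ref{edge gene}. The tool used throughout is Lemma \ref{cp}. For an induced subgraph $H$ of $G$ on a vertex set $S$, the polytope ${\mathcal P}_H$ is the face of $\epoly$ cut out by the supporting hyperplane $\sum_{i\notin S} x_i = 0$. Hence quadratic generation passes from $I_{\epoly}$ to $I_{{\mathcal P}_H}$, and indispensable binomials of $I_{{\mathcal P}_H}$ stay indispensable in $I_{\epoly}$.

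\emph{Necessity.} Assume $I_{\epoly}$ is quadratic and suppose one of (i)--(iii) fails; we aim for a contradiction.
\begin{itemize}
\item If (i) fails, take an even cycle $C$ of length $2q\ge 6$ with no even chord and without three odd chords two of which cross. We restrict to the induced subgraph on $V(C)$. We then argue directly, via \eqref{basicIP}, that no quadratic binomial in $I_{\epoly}$ has a monomial dividing either monomial of $f_C$. Such a quadratic binomial would be $f_{C'}$ for a $4$-cycle $C'$, or $f_\Gamma$ for two triangles sharing a vertex; any such structure using two edges of $C$ in the same monomial forces an even chord or a crossing configuration. Then $f_C$ cannot lie in the ideal generated by quadratics, because $f_C$ is not in $\mathfrak m I_{\epoly}$ in its degree.
\item If (ii) or (iii) fails, the choice of generators is forced. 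In (ii), take two induced odd cycles $C_1,C_2$ sharing one vertex, with no edge between them outside $E(C_1)\cup E(C_2)$. In (iii), take two induced odd cycles in one component with at most one bridge, joined by a shortest path. In both cases we pass to the induced subgraph spanned by the closed walk $\Gamma$. The walk $\Gamma$ then has no chords, or only the single bridge, which lies on $\Gamma$ when the path is that bridge. Proposition \ref{ep:indispensable}(b) makes $f_\Gamma$ indispensable of degree $\ge 3$, which contradicts quadratic generation.
\end{itemize}
One subtlety is that the two odd cycles in (iii) might share a further chord-free structure with the path. We handle this by choosing the path to be shortest.

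\emph{Sufficiency.} Assume (Q) holds. By Proposition \ref{edge gene} it suffices to show that each $f_\Gamma$ of types (i)--(iii) lies in the ideal generated by quadratic binomials. We induct on the length of $\Gamma$ and use the standard move: a binomial $u-v$ is reduced once a quadratic $x_ax_b-x_cx_d\in I_{\epoly}$ is found with $x_ax_b\mid u$, after which $u-v$ splits into a multiple of the quadratic plus a binomial whose closed walk is shorter or of an earlier type.
\begin{itemize}
\item For an even cycle with an even chord, the chord splits $C$ into two shorter even cycles, and $f_C$ is a combination of their binomials.
\item For an even cycle with three odd chords two of which cross, the crossing pair forms a $4$-cycle together with two edges of $C$. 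Its quadratic binomial swaps those edges, and the third chord then produces an even cycle shorter than $C$.
\item For type (ii), the extra edge $\{i,j\}$ of (Q)(ii) creates an even cycle, or a shorter pair of odd cycles, through $i,j$.
\item For type (iii), the two bridges of (Q)(iii) create an even cycle through both bridges and reduce the connecting path.
\end{itemize}
Each odd cycle here may be taken induced: a chord of an odd cycle splits it into an odd cycle and an even cycle, and the even part is handled by the cycle case.

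The main obstacle is the sufficiency case with crossing odd chords, and the bookkeeping needed to make the induction parameter strictly decrease while keeping the new walks of the forms in Proposition \ref{edge gene}. I would first try induction on the pair (degree of $f_\Gamma$, number of non-chord vertices). I would also check carefully that the hypothesis of three odd chords, rather than just two crossing ones, is exactly what is needed to close the reduction when $2q=6$.
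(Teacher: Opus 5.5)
\textbf{Necessity has two gaps.} The paper does not prove this proposition; it quotes it from Ohsugi--Hibi. The closest argument in the paper is the proof of (iv)$\Rightarrow$(i) in Theorem~\ref{edge_equivalent}, which derives (Q) from a weaker hypothesis.

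The first gap concerns (Q)(i). Its failure does \emph{not} exclude crossing chords: $C$ may have no even chord and exactly two odd chords, and these may cross. They then form a $4$-cycle with two edges of $C$, so your claim that no quadratic monomial divides either monomial of $f_C$ is false. In the graph of Remark~\ref{not equiv}, $x_1x_3-x_7x_8\in I_{\epoly}$ and $x_1x_3$ divides $x_1x_3x_5$. The repair is short. A crossing pair always exchanges two edges of $C$ of the same parity, and in this situation it is the only $4$-cycle on $V(C)$. So the other monomial ($x_2x_4x_6$ in the example) is divisible by no monomial of any quadratic binomial. A binomial $u-v$ with $u\neq v$ lies in an ideal generated by pure binomials only if both $u$ and $v$ are divisible by a monomial of some generator. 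Hence $f_C$ is not generated by quadrics. That is the correct justification; ``$f_C\notin\mathfrak{m}I_{\epoly}$'' does not follow from divisibility. Once repaired, this part is more direct than the paper's Case~1/Case~2 analysis, which is needed there only because the hypothesis is weaker.

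The second gap concerns (iii) with no bridge: a shortest path does not make $\Gamma$ chordless. Minimality only forbids extra edges from the path to $C_1$ at interior vertices other than the first one, $\ell$ (symmetrically for $C_2$), and $\ell$ may have several neighbours on $C_1$. For example, take triangles $\{1,2,3\}$ and $\{5,6,7\}$ and a vertex $4$ adjacent to $1,2,5$. Every shortest path then has the chord $\{1,4\}$ or $\{2,4\}$, so Proposition~\ref{ep:indispensable}(b) does not apply. You need the minimal-counterexample argument from the proof of Theorem~\ref{edge_equivalent}:
\begin{itemize}
\item replace $C_1$ by an induced odd cycle through $\ell$;
\item use minimality of $s$ and the single-bridge case to force $s=2$ with at least two bridges at $\ell$;
\item finish with two induced odd cycles meeting only in $\ell$ that violate (ii).
\end{itemize}
Your (ii) case and your single-bridge case are fine.

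\textbf{Sufficiency is not established.} This direction is the real content of the Ohsugi--Hibi theorem, and your step for crossing chords fails. Suppose the crossing chords are $\{v_i,v_j\}$ and $\{v_{i+1},v_{j+1}\}$. The quadratic swap yields a cycle of the same length as $C$. The third chord $e''$ becomes an even chord of that cycle only if it joins the arcs $v_{i+1}\cdots v_j$ and $v_{j+1}\cdots v_i$; otherwise it stays odd and nothing is gained.

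In fact (Q)(i) for $C$ together with the induction hypothesis cannot suffice. Write $x_{ab}$ for the variable of the edge $\{a,b\}$, and take $C=(1,2,\dots,6)$ with chords $\{1,3\},\{2,4\},\{4,6\}$. Then:
\begin{itemize}
\item $C$ satisfies (Q)(i), and all shorter even closed walks give quadrics;
\item the only quadratic binomials come from the $4$-cycles $(1,2,4,3)$, $(1,2,4,6)$, $(1,3,4,6)$;
\item none of their monomials divides $x_{23}x_{45}x_{16}$, so $f_C$ is not generated by quadrics.
\end{itemize}
This is consistent with the theorem only because (Q)(ii) fails for the triangles $\{2,3,4\}$ and $\{4,5,6\}$. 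So the even-cycle case must invoke (Q)(ii) and (Q)(iii) for the odd cycles cut off by odd chords, interleaved with the induction. Likewise, in type (iii) the claim that the two bridges ``reduce the connecting path'' is asserted rather than proved.
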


Recall that, given a graph $G$, each vertex of the edge polytope of $G$ arises from an edge of $G$.
Edges of the edge polytope $\epoly$ of $G$ are characterized as follows.
    
\begin{prop}[{\cite[Lemma 1.4]{OHsimple}}] \label{edge of edge}
     Let $e = \{i,j\}$ and $e'=\{k,\ell\}$ $(e \ne e')$ be edges of a graph $G$.
     Then, the convex hull of $\{\eb_i+\eb_j, \eb_k+\eb_\ell\}$ is an edge of the edge polytope $\epoly$
     if and only if one of the following
 conditions is satisfied.
 \begin{itemize}
     \item[(i)]
$e$ and $e'$ have a common vertex{\rm ;}
     \item[(ii)]
      $e$ and $e'$ have no common vertices, and the induced subgraph of $G$ on the vertex set $\{i,j,k,\ell\}$ has no cycles of length $4$.
 \end{itemize}
\end{prop}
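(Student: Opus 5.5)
The plan is to treat both directions through supporting hyperplanes and, for non-edges, through a midpoint obstruction. Since $\epoly$ is a $(0,1)$-polytope, $\epoly\cap\ZZ^d$ is its vertex set. The basic tool for non-edges is the following: if $\eb_i+\eb_j+\eb_k+\eb_\ell$ can be written as a sum of two vertices in a way different from $(\eb_i+\eb_j)+(\eb_k+\eb_\ell)$, then the segment is not an edge. To see this, take the inner product with a supporting vector of the would-be edge, exactly as in the proof of Lemma~\ref{key}; the two other vertices would then lie on the segment, which is impossible since the segment is primitive.

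\emph{Necessity.} Suppose $e,e'$ share no vertex and the induced subgraph on $\{i,j,k,\ell\}$ contains a $4$-cycle. A $4$-cycle on these four vertices contains two disjoint edges forming a perfect matching different from $\{e,e'\}$. Indeed, the three perfect matchings of $K_4$ on $\{i,j,k,\ell\}$ are $\{e,e'\}$, $\{\{i,k\},\{j,\ell\}\}$ and $\{\{i,\ell\},\{j,k\}\}$, and a $4$-cycle is the union of two of them. Say the matching is $\{i,k\},\{j,\ell\}$. Then
\[
(\eb_i+\eb_j)+(\eb_k+\eb_\ell)=(\eb_i+\eb_k)+(\eb_j+\eb_\ell),
\]
so by the observation above the segment is not an edge.

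\emph{Sufficiency, case (i).} If $e$ and $e'$ share a vertex, write $e=\{i,j\}$ and $e'=\{j,\ell\}$ with $i\neq \ell$. Take
\[
\wb=2\eb_j+\eb_i+\eb_\ell-N\sum_{p\notin\{i,j,\ell\}}\eb_p
\]
for large $N$. Then $\wb$ takes the value $3$ on $\eb_i+\eb_j$ and on $\eb_j+\eb_\ell$, at most $2$ on $\eb_i+\eb_\ell$, and at most $2$ on every other edge vector. Hence the face cut out by $\wb$ is exactly the segment.

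\emph{Sufficiency, case (ii).} Let $H$ be the induced subgraph on $S=\{i,j,k,\ell\}$. Use
\[
\wb=\sum_{p\in S}\eb_p-N\sum_{p\notin S}\eb_p,
\]
whose maximum value $2$ is attained exactly at the edge vectors of $H$. Thus $F:=\mathrm{conv}\{\eb_p+\eb_q:\{p,q\}\in E(H)\}$ is a face of $\epoly$, and it suffices to show that $e,e'$ span an edge of $F$. Since $H$ has no $4$-cycle, no other perfect matching of $S$ lies entirely in $H$ (otherwise it would form a $4$-cycle together with $e,e'$). So $H$ consists of $e$, $e'$, and at most one edge from each of the two other matchings. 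Hence $|E(H)|\le 4$, and the extra edges, if there are two of them, share a vertex.

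I then check that the edge vectors of $H$ are linearly independent in $\RR^4$. Up to symmetry the worst case is $e=\{i,j\}$, $e'=\{k,\ell\}$, $\{i,k\}$, $\{j,k\}$, which contains the odd cycle $i,j,k$. A direct $4\times 4$ determinant computation gives a nonzero value. Consequently $F$ is a simplex, every pair of its vertices spans an edge of $F$, and hence $\mathrm{conv}\{\eb_i+\eb_j,\eb_k+\eb_\ell\}$ is an edge of $\epoly$.

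The main obstacle is this last combinatorial step: enumerating the possible $H$ and confirming linear independence in each case. The remaining steps are routine supporting-hyperplane arguments.
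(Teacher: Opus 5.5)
Your proof is correct. The paper gives no proof of this statement; it quotes it from \cite{OHsimple}. So there is nothing internal to compare against, and your argument works as a self-contained substitute built from the paper's own tools.

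\textbf{Non-edge direction.} This is exactly Lemma~\ref{key} applied to the alternative perfect matching $\{\{i,k\},\{j,\ell\}\}$ or $\{\{i,\ell\},\{j,k\}\}$. It is the same computation the paper performs in Lemma~\ref{edgepolyE}.

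\textbf{Edge direction.} Here you use an explicit supporting functional together with affine independence of the vertices of the resulting face. This mirrors the style of the matroid independence polytope argument in Section~4. Your route in fact yields slightly more than the statement: in case (ii) the face $\mathcal{P}_H$ cut out by the induced subgraph on $\{i,j,k,\ell\}$ is a simplex. So the two disjoint edges of $G$ lie in a common simplex face, not merely on an edge.

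\textbf{One step to make explicit.} Your phrase ``up to symmetry the worst case'' should be justified when you write this up. Every admissible $H$ consists of $e$ and $e'$ plus at most one edge from each of the other two perfect matchings of $\{i,j,k,\ell\}$. Any such $H$ is a subgraph of a triangle with a pendant edge on these four vertices. All such graphs are isomorphic by relabeling, and their four edge vectors have determinant $\pm 2$ (you computed $-2$). Linear independence then passes to the $2$- and $3$-edge cases as subsets.

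\textbf{Minor simplification.} $N=0$ already works in both of your functionals.
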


\begin{lemma}\label{edgepolyE}
    Let $G$ be a graph. Then the edge polytope $\epoly$ of $G$ satisfies (E).
\end{lemma}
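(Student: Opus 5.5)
We have to show that two distinct vertices $\eb_i+\eb_j$ and $\eb_k+\eb_\ell$ of $\epoly$ (from edges $e=\{i,j\}\ne e'=\{k,\ell\}$ of $G$) span an edge of $\epoly$ exactly when $\eb_i+\eb_j+\eb_k+\eb_\ell$ can be written as a sum of two vertices of $\epoly$ in only one way. The introduction notes that the ``only if'' direction holds for every $(0,1)$-polytope; it is also the argument of Lemma \ref{key}. So the work is the ``if'' direction, which we prove in contrapositive form. We assume ${\rm conv}(\{\eb_i+\eb_j,\eb_k+\eb_\ell\})$ is not an edge of $\epoly$ and construct a second decomposition $\eb_i+\eb_j+\eb_k+\eb_\ell=(\eb_p+\eb_q)+(\eb_r+\eb_s)$ with $\{\{p,q\},\{r,s\}\}\ne\{e,e'\}$ and $\{p,q\},\{r,s\}\in E(G)$.

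By Proposition \ref{edge of edge}, failure to be an edge means two things. First, $e$ and $e'$ share no vertex, so $i,j,k,\ell$ are four distinct vertices. Second, the induced subgraph of $G$ on $\{i,j,k,\ell\}$ contains a $4$-cycle. This $4$-cycle passes through all four vertices, and it contains both $e$ and $e'$ or at least the relevant edges. We split into cases according to how $e$ and $e'$ sit in the cycle.

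If $e$ and $e'$ are opposite edges of the $4$-cycle, the cycle is $i,j,\ell,k$ or $i,j,k,\ell$ up to relabelling. Its other two edges, say $\{j,k\},\{\ell,i\}$ (or $\{j,\ell\},\{k,i\}$), are edges of $G$. Their incidence vectors sum to $\eb_i+\eb_j+\eb_k+\eb_\ell$ and form a pair different from $\{e,e'\}$.

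If $e$ or $e'$ is a diagonal of the $4$-cycle, then, since $e$ and $e'$ are disjoint and the two diagonals of a $4$-cycle are disjoint, both must be diagonals. In that case the cycle's edges are, for example, $\{i,k\},\{k,j\},\{j,\ell\},\{\ell,i\}$. Taking the opposite pair $\{i,k\},\{j,\ell\}$ gives the second decomposition.

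In either case the vertex sum has at least two representations, which proves (E). The only point needing care is that the new pair really differs from $\{e,e'\}$ as a multiset. This is immediate because the four indices are distinct and the new edges are not $e$ or $e'$. The main step is simply checking the cases of the $4$-cycle configuration, and I expect no real obstacle.
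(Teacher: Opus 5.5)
Your proposal is correct and is essentially the paper's proof: both use Proposition~\ref{edge of edge} to get that $e$ and $e'$ are disjoint and that the induced subgraph on $\{i,j,k,\ell\}$ contains a $4$-cycle. This forces $\{\{i,\ell\},\{j,k\}\}\subset E(G)$ or $\{\{i,k\},\{j,\ell\}\}\subset E(G)$, and hence a second decomposition, with the ``only if'' direction taken as the general fact. Your opposite-edges/diagonals case split is just an explicit check that any $4$-cycle on these four vertices contains one of the two perfect matchings other than $\{e,e'\}$; note that the cycle need not contain $e$ and $e'$ themselves, which your diagonal case already handles.
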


\begin{proof}
 Let $e = \{i,j\}$ and $e'=\{k,\ell\}$ $(e \ne e')$ be edges of a graph $G$.
Suppose that the convex hull of  $\{\eb_i+\eb_j, \eb_k+\eb_\ell\}$ is not an edge of  $\epoly$ of $G$.
From Proposition~\ref{edge of edge}, 
    $e$ and $e'$ have no common vertices, and the induced subgraph of $G$ on the vertex set $\{i,j,k,\ell\}$ has a cycle of length $4$.
Then either $\{\{i,\ell\}, \{j,k\}\} \subset E(G)$ or $\{\{i,k\}, \{j,\ell\}\} \subset E(G)$.
If $\{\{i,\ell\}, \{j,k\}\} \subset E(G)$, then $(\eb_i+\eb_j) + (\eb_k+\eb_\ell) = (\eb_i+\eb_\ell) + (\eb_j+\eb_k)$.
Similarly, if $\{\{i,k\}, \{j,\ell\}\}  \subset E(G)$, then $(\eb_i+\eb_j) + (\eb_k+\eb_\ell) = (\eb_i+\eb_k) + (\eb_j+\eb_\ell)$.
\end{proof}

We now show that three conditions in \eqref{three_conditions} are equivalent for edge polytopes.

\begin{thm}\label{edge_equivalent}
    Let $G$ be a graph.
    Then the following conditions are equivalent.
    \begin{itemize}
        \item[{\rm (i)}] 
        $G$ satisfies (Q){\rm ;}
    
        \item[{\rm (ii)}] 
        $I_{\epoly}$ is quadratic{\rm ;}

        \item[{\rm (iii)}] 
        $\epoly$ satisfies the clique-face property $(*)${\rm ;}
        \item[{\rm (iv)}] 
        $I_{\epoly}$ has no indispensable monomials of degree $\ge 3$.

    \end{itemize}
\end{thm}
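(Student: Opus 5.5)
We establish the chain (i) $\Leftrightarrow$ (ii) $\Rightarrow$ (iii) $\Rightarrow$ (iv) $\Rightarrow$ (i). The equivalence (i) $\Leftrightarrow$ (ii) is Proposition~\ref{edgepolyquad}. The implication (ii) $\Rightarrow$ (iii) is Corollary~\ref{zero_one}, since $\epoly$ is a $(0,1)$-polytope. The implication (iii) $\Rightarrow$ (iv) follows from Theorem~\ref{indispensable}, because $\epoly$ satisfies (E) by Lemma~\ref{edgepolyE}. It therefore remains to prove (iv) $\Rightarrow$ (i).

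We argue by contraposition: if $G$ violates (Q), we exhibit an indispensable binomial of degree $\ge 3$. Both of its monomials are then indispensable monomials of degree $\ge 3$. To find this binomial, we pass to a suitable induced subgraph $H$ of $G$. The edge polytope $\mathcal{P}_H$ is a face of $\epoly$: it is cut out by $\sum_{v\notin V(H)} x_v = 0$, which is a valid inequality $\ge 0$ on $\epoly$. Hence, by Lemma~\ref{cp}(iv), an indispensable binomial of $I_{\mathcal{P}_H}$ is indispensable in $I_{\epoly}$. The task reduces to choosing a minimal violating configuration and checking indispensability via Proposition~\ref{ep:indispensable}.

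The case analysis follows the three parts of (Q).

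\textbf{Violation of (iii).} There are induced odd cycles $C_1,C_2$ in one component, vertex-disjoint, with at most one bridge. Choose such a pair together with a shortest path $\Gamma_1$ joining them; if there is a bridge, take $\Gamma_1$ to be that bridge. We then minimize over the choice. The induced subgraph on $V(C_1)\cup V(C_2)\cup V(\Gamma_1)$ should be chordless in the sense of Proposition~\ref{ep:indispensable}(b)(ii). If a chord exists, it produces either a shorter connecting path or a smaller pair of odd cycles that also violates (ii) or (iii). We then reduce to that smaller configuration by induction on the number of vertices.

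\textbf{Violation of (ii).} The argument is similar. With no edge between $C_1$ and $C_2$ outside the cycles, the union of the two induced odd cycles is chordless, so Proposition~\ref{ep:indispensable}(b)(i) applies. The resulting binomial has degree $(|C_1|+|C_2|)/2\ge 3$.

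\textbf{Violation of (i).} There is an even cycle $C$ of length $\ge 6$ with no even chord that lacks three odd chords of the required crossing type. Take such a $C$ of minimal length. We want to show that $C$ has no pair of odd chords crossing effectively, so that Proposition~\ref{ep:indispensable}(a) makes $f_C$ indispensable of degree $|C|/2\ge 3$. Suppose $e=\{v_i,v_j\}$ and $e'=\{v_k,v_\ell\}$ cross effectively. Then $e$ and $e'$, together with two arcs of $C$, form a shorter even cycle $C'$ of length $\ge 6$, unless the two arcs are single edges of $C$, in which case $e$ and $e'$ cross. In the first case, we must check that $C'$ still violates (i), using that even or odd chords of $C'$ correspond to chords of $C$ of controlled parity. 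This would contradict minimality.

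In the second case, $e$ and $e'$ cross, so (i) forces the failure to be the absence of any third odd chord. We then expect to derive an even cycle of length $\ge 6$ from the other arcs, or to show that $C$ has length exactly $6$ with precisely these two crossing chords. In the latter case we exhibit directly a non-quadratic indispensable binomial, for example via the two triangles sharing structure given by Proposition~\ref{ep:indispensable}(b).

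This parity bookkeeping for (i) is the main obstacle. If the reduction fails, the fallback is to invoke the full characterization \cite[Theorem 4.14]{RTT} on the minimal violating cycle.
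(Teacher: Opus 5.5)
\textbf{Gap: the target of your contrapositive is wrong.} For (iv) $\Rightarrow$ (i) you plan to show that every violation of (Q) produces an indispensable \emph{binomial} of degree $\ge 3$. That would prove (Q) from the mere absence of indispensable binomials of degree $\ge 3$, and Remark~\ref{not equiv} shows this is false. The failure is exactly in the case you flagged as the main obstacle.

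Take a minimal violating even cycle $C$ whose only odd chords $e,e'$ cross. An example is the $6$-cycle of Remark~\ref{not equiv} with chords $\{1,3\},\{2,4\}$; use the edge labelling there.
\begin{itemize}
\item[(a)] $H=C\cup\{e,e'\}$ is induced. The two even cycles through $e,e'$ are a $4$-cycle $C_1$ and a cycle $C_2$ of the same length as $C$, which again has two crossing odd chords.
\item[(b)] So this configuration occurs for every even length $\ge 6$, and your hope of reducing to length exactly $6$ also fails.
\item[(c)] In the example, $f_C=x_1x_3x_5-x_2x_4x_6$, $f_{C_2}=x_5x_7x_8-x_2x_4x_6$ and $f_{C_1}=x_1x_3-x_7x_8$, with $f_C-f_{C_2}=x_5f_{C_1}$. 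Hence neither cubic is indispensable.
\end{itemize}
For that graph there is no indispensable binomial of degree $\ge 3$ at all. So neither parity bookkeeping nor an appeal to \cite[Theorem 4.14]{RTT} can produce one. Your ``two triangles'' fallback also fails: the triangles $\{1,2,3\}$ and $\{2,3,4\}$ share an edge, and no two odd cycles of this graph meet in exactly one vertex.

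\textbf{Missing idea: use indispensable monomials.} This is why (iv) is phrased in terms of monomials. In the crossing case the paper argues as follows.
\begin{itemize}
\item[(a)] $H$ is a subdivision of $K_4$ with odd paths. By \cite[Theorem 3.1]{RTT}, $f_C,f_{C_1},f_{C_2}$ are the only candidate minimal binomials of $I_{\mathcal{P}_H}$.
\item[(b)] $f_{C_1}$ is indispensable. Hence the minimal systems of binomial generators are exactly $\{f_{C_1},f_C\}$ and $\{f_{C_1},f_{C_2}\}$.
\item[(c)] The common monomial of $f_C$ and $f_{C_2}$ (here $x_2x_4x_6$) has degree $|C|/2\ge 3$ and lies in every system of binomial generators.
\item[(d)] Lemma~\ref{cp}(iii) lifts it to an indispensable monomial of $I_{\epoly}$.
\end{itemize}

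\textbf{The rest of your sketch.} Your handling of violations of (Q)(ii) and (Q)(iii), via Proposition~\ref{ep:indispensable}(b) and a shortest connecting path, is in line with the paper. One correction in the non-crossing subcase of (i): the shorter cycle $C'$ need not violate (i). Odd chords of $C$ joining the two arcs of $C'$ become even chords of $C'$. The paper handles this by a second minimization of the length of $C_1$ over effectively crossing pairs.
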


\begin{proof}
From Corollary \ref{zero_one}, Theorem \ref{indispensable}, Proposition \ref{edgepolyquad}, and Lemma \ref{edgepolyE}, we have
    ``(i) $\Leftrightarrow$ (ii) $\Rightarrow $ (iii) $\Rightarrow $ (iv)".
    Hence it is enough to show that ``(iv) $\Rightarrow $ (i)".
Suppose that 
        $I_{\epoly}$ has no indispensable monomial of degree $\ge 3$.
        In particular, $I_{\epoly}$ has no indispensable binomial of degree $\ge 3$.
        From Proposition \ref{ep:indispensable} (b), 
        we may assume that
        \begin{itemize}
            \item
            there do not exist induced odd cycles \(C_1\) and \(C_2\) having exactly one common vertex such that there is no edge
\(\{i,j\} \notin E(C_1)\cup E(C_2)\) with
\(i\in V(C_1)\) and \(j\in V(C_2)\);

 \item 
 there do not exist induced odd cycles \(C_1\) and \(C_2\) having no common vertices such that there exists exactly one bridge between \(C_1\) and \(C_2\).
        \end{itemize}

 Suppose that induced odd cycles $C_1$ and $C_2$ 
 in the same connected component of $G$ have no common
 vertices, and there exists no bridge between $C_1$ and $C_2$.
Since $C_1$ and $C_2$ belong to the same connected component of $G$, there exists a path $\Gamma = (e_1,\dots,e_s)$ of $G$
from $i \in V(C_1)$ to $j \in V(C_2)$ with $s \ge 2$.
We may assume that $s$ is minimal among such $C_1$, $C_2$, and $\Gamma$.
Since $C_1$ and $C_2$ are induced odd cycles without bridges,
$C_1 \cup C_2$ is an induced subgraph of $G$.
In addition, from the minimality of $s$, $\Gamma$ is an induced path of $G$.
If $C_1 \cup \Gamma \cup C_2$ is an induced subgraph of $G$, then, 
from Proposition \ref{ep:indispensable} (b), $f_{\Gamma'}$ where $\Gamma' =(C_1, \Gamma ,C_2)$
 is indispensable, a contradiction. 
Thus $C_1 \cup \Gamma \cup C_2$ is not an induced subgraph.
We may assume that there exists an edge $e=\{k,\ell\}$ where $k \in V(C_1) \setminus \{i\}$, $\ell \in \Gamma \setminus \{i\}$.
Then $e_1 = \{i,\ell\}$ from the minimality of $s$.
It then follows that there exists an odd cycle $C_1'$ in $C_1 \cup \{e_1, e\}$
such that $\{e_1, e\} \subset C_1'$.
Taking an induced odd cycle in \(C_1\cup\{e_1,e\}\) containing \(\ell\),
we may assume that \(C'_1\) is induced.
Then 
$C_1'$ and $C_2$ have no common vertices,
and are joined by the path $\Gamma \setminus \{e_1\}$ of length $s-1$.
If \(C'_1\) and \(C_2\) had no bridge, then the path
\(\Gamma\setminus\{e_1\}\) would contradict the minimality of \(s\).
Hence 
$C_1'$ and $C_2$ have at least two bridges.
In particular, $s=2$ and two bridges are $e_2$
and $e'=\{\ell, m\}$ with $m \in V(C_2) \setminus \{j\}$.
\begin{center}  
\begin{tikzpicture}[scale=0.7,transform shape,
    vertex/.style={circle,draw,minimum size=2mm,inner sep=0pt},
    every node/.style={font=\large}
]

\def\r{1.8}

\node[vertex] (L)  at ({-5+\r*cos(180)},{\r*sin(180)}) {};
\node[vertex] (Lb) at ({-5+\r*cos(252)},{\r*sin(252)}) {};
\node[vertex] (n)  at ({-5+\r*cos(324)},{\r*sin(324)}) {};
\node[vertex] (l)  at ({-5+\r*cos(36)},{\r*sin(36)}) {};
\node[vertex] (Lt) at ({-5+\r*cos(108)},{\r*sin(108)}) {};

\draw (L)--(Lt);
\draw (Lt)--(l);
\draw (l)--(n);
\draw (n)--(Lb);
\draw (Lb)--(L);
\node at (-5,0) {$C_1$};

\node[vertex] (c) at (0,0) {};

\node[vertex] (d)  at ({5+\r*cos(144)},{\r*sin(144)}) {};
\node[vertex] (Rt) at ({5+\r*cos(72)},{\r*sin(72)}) {};
\node[vertex] (R)  at ({5+\r*cos(0)},{\r*sin(0)}) {};
\node[vertex] (Rb) at ({5+\r*cos(-72)},{\r*sin(-72)}) {};
\node[vertex] (m)  at ({5+\r*cos(-144)},{\r*sin(-144)}) {};

\draw (d)--(Rt)--(R)--(Rb)--(m)--cycle;
\draw (d)--(m);

\node at (5,0) {$C_2$};

\draw (l)-- node[above] {$e_1$} (c);
\draw (n)-- node[below] {$e$} (c);

\draw (c)-- node[above] {$e_2$} (d);
\draw (c)-- node[below] {$e'$} (m);

\node[above=8pt] at (l) {$i$};
\node[below=8pt] at (n) {$k$};

\node[above=8pt] at (c) {$\ell$};

\node[above=8pt] at (d) {$j$};
\node[below=8pt] at (m) {$m$};

\end{tikzpicture}
\end{center}

\noindent
It then follows that there exists an odd cycle $C_2'$ in $C_2 \cup \{e_2, e'\}$
such that $\{e_2, e'\} \subset C_2'$.
Taking an induced odd cycle in \(C_2\cup\{e_2,e'\}\) containing \(\ell\),
we may assume that \(C'_2\) is induced.
Then $C_1'$ and $C_2'$ have exactly one common
 vertex $\ell$,  and there exists no edge 
 \(\{u,v\}\notin E(C'_1)\cup E(C'_2)\) with
\(u\in V(C'_1)\) and \(v\in V(C'_2)\), a contradiction.

Suppose that there exists an even cycle $C$ of $G$ of length $t \ge 6$ such that
    $C$ has no even chord and $C$ has no three odd chords $e$, $e'$, and $e''$ such that $e $ and $e'$ cross in $C$.
    We may assume that $t$ is minimal among such even cycles.
    Since $f_C$ is not indispensable, from Proposition \ref{ep:indispensable} (a), $C$ has odd chords $e$ and $e'$ such that $e $ and $e'$ cross effectively in $C$.
    Then $C \cup \{e,e'\}$ has two even cycles $C_1$ and $C_2$ with $e, e' \in C_i$ for each $i=1,2$.
    We may assume that the length of $C_1$ is minimal among such $e$, $e'$, $C_1$ and $C_2$.  
    Then $f_{C_1}$ is quadratic if and only if $e$ and $e'$ cross in $C$.

\bigskip

\noindent
{\bf Case 1} ($H=C \cup \{e,e'\}$ is an induced subgraph of $G$){\bf .}
Since \(H\) is an induced subgraph of \(G\), \(\mathcal{P}_H\) is a face of \(\mathcal{P}_G\).
Hence an indispensable monomial of \(I_{\mathcal{P}_H}\) is also indispensable for
\(I_{\mathcal{P}_G}\) by Lemma \ref{cp} (iii).
    If $e$ and $e'$ do not cross in $C$, then both $f_{C_1}$ and $f_{C_2}$ are indispensable binomials of degree $\ge 3$, a contradiction.
    Hence $e$ and $e'$ cross in $C$.

\medskip \noindent \emph{Claim.} After relabeling \(C_1\) and \(C_2\) if necessary, the minimal systems of binomial generators of \(I_{\mathcal P_H}\) are, up to nonzero scalar multiples, \[ \{f_{C_1},f_C\} \qquad\text{and}\qquad \{f_{C_1},f_{C_2}\}. \] Moreover, \(f_C\) and \(f_{C_2}\) have a common monomial of degree at least \(3\). 

\medskip

Indeed, \(H\) is a subdivision of \(K_4\) in which every replacement path has odd length. Its only primitive walks giving nonzero binomials are the three even cycles \(C,C_1,C_2\); see \cite[Theorem~3.1]{RTT}. Hence the only possible minimal binomials of \(I_{\mathcal P_H}\) are \(f_C,f_{C_1},f_{C_2}\). The criteria in \cite[Theorems~4.13 and 4.14 and Proposition~4.10]{RTT} show that \(f_{C_1}\) is indispensable and that \(f_C\) and \(f_{C_2}\) are minimal. With suitable choices of signs, we may write \[ f_C=u-v,\qquad f_{C_2}=v-w,\qquad m f_{C_1}=u-w \] for some monomials \(u,v,w\) and \(m\). Thus \[ f_C+f_{C_2}=m f_{C_1}. \] It follows that the two displayed pairs generate \(I_{\mathcal P_H}\). Since \(f_{C_1}\) is indispensable and there are no other possible minimal binomials, these are precisely the minimal systems of binomial generators. The common monomial \(v\) has degree at least \(3\). This proves the claim. 

\medskip

Note that the monomial \(v\) occurs in every minimal system of binomial generators of \(I_{\mathcal P_H}\). Since every binomial generating set contains a minimal generating subset, \(v\) is an indispensable monomial of \(I_{\mathcal P_H}\). By Lemma~\ref{cp} (iii), it is also an indispensable monomial of \(I_{\mathcal{P}_G}\), a contradiction.

\bigskip

\noindent
{\bf Case 2} ($C \cup \{e,e'\}$ is not an induced subgraph of $G$){\bf .}    
    Then $C$ has an odd chord $e''$ $(\ne e, e')$.  
    If $e$ and $e'$ cross in $C$, then $e,e',e''$ are three odd chords satisfying condition (Q) (i), a contradiction. Hence $e$ and $e'$ do not cross in $C$, and therefore
    the length of $C_1$ is at least $6$.
    If $\widetilde{e}$ is a chord of $C_1$, then $\widetilde{e}$ is an odd chord of $C_1$ if and only if $\widetilde{e}$ is an even chord of $C$.
    Hence $C_1$ has no odd chords, and has at least one even chord $e_1$
    by the hypothesis of minimality of the length of $C$.
    Then either ``$e$ and $e_1$" or ``$e'$ and $e_1$" cross effectively in $C$,
    and contradicts minimality of the length of $C_1$.

\bigskip

    Therefore, $G$ satisfies (Q), as desired.
\end{proof}

\begin{Remark} \label{not equiv}
    Each of the conditions in Theorem \ref{edge_equivalent} is \emph{not} equivalent to
    \begin{itemize}
        \item[(v)]
                $I_{\epoly}$ has no indispensable {\it binomials} of degree $\ge 3$.
    \end{itemize}
    In fact, if $G$ is a graph on the vertex set $\{1,2,3,4,5,6\}$ whose edge set is
    $$E(G)= \{ \{1,2\},\{ 2, 3\}, \{ 3,4\}, \{ 4, 5\}, \{ 5, 6\},\{ 1,6 \}, \{ 1, 3\}, \{ 2,4 \}   \},$$
    then
    \[
    I_{\epoly} = 
    \langle x_1x_3x_5- x_2x_4x_6, x_1 x_3 -x_7 x_8\rangle
    =\langle x_5x_7x_8- x_2x_4x_6, x_1 x_3 -x_7 x_8\rangle  
    .\]
    The monomial $x_2x_4x_6$ is an indispensable monomial of degree $3$.
    On the other hand, there exist no indispensable binomials of degree $\ge 3$.
\end{Remark}
    
\subsection{Cut polytopes}
Let $G$ be a connected graph on the vertex set $V=\{1,2,\dots,n\}$
and the edge set $E$. 
Given $S\subset V$, the cut semimetric on $G$ induced by $S$ is the $(0,1)$-vector $\delta_G(S)=(d_{ij} : \{i,j\}\in E) \in \RR^{E}$, where
$$
d_{ij}
:=
\left\{
\begin{array}{cl}
1     & \mbox{if } |S\cap \{i,j\}|=1, \\
0     & \mbox{otherwise } 
\end{array}
\right.
$$
for each $\{i,j\}\in E$.
In particular, $\delta_G(\emptyset) = {\bf 0}$.
Note that $\delta_G(S) = \delta_G(V \setminus S)$.
The vector $\delta_G(S)$ is the incidence vector
of a {\it cut} $\{e \in E : |S\cap e|=1 \}$ associated with $S$.
The \textit{cut polytope} ${\rm Cut}^\Box(G)$ of $G$ is the convex hull of $\{\delta_G(S) : S \subset V\} \subset  \ZZ^E$.
In general, ${\rm Cut}^\Box(G)$ is a full-dimensional $(0,1)$-polytope with $2^{n-1}$ vertices.
Let $I\ \Delta \ J$ denote the symmetric difference of $I$ and $J$, i.e., 
$I\ \Delta\ J := (I \setminus J ) \cup  (J\setminus I)$.
Note that if $I$ and $J$ are cuts, then so is $I \ \Delta \ J$.
Edges of ${\rm Cut}^\Box(G)$ are characterized as follows.

\begin{prop}[{\cite[Theorem 4.1]{BM}}]
\label{mmprop}
    Let $G=(V,E)$ be a connected graph and let 
    $I$ and $J$ be cuts associated with $S, S' \subset V$, respectively.
    Then $\delta(S)$ and $\delta(S')$ are adjacent in ${\rm sk}({\rm Cut}^\Box(G))$
    if and only if 
   the graph $H=(V, E \setminus (I \ \Delta \ J))$
has exactly two connected components.
\end{prop}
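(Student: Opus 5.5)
The plan is to reduce to the case where one of the two vertices is the origin, using the symmetry of the cut polytope under symmetric difference with a cut, and then to handle both directions by hand. The forward direction will be proved in contrapositive form via a nontrivial decomposition, and the backward direction by writing down an explicit supporting hyperplane.

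\emph{Reduction.} For a cut $I=\delta_G(S)$, consider the affine map $\phi_I:\RR^E\to\RR^E$ that replaces $x_e$ by $1-x_e$ for $e\in I$ and leaves the other coordinates unchanged. This is an affine involution of the unit cube. On $(0,1)$-vectors it acts as symmetric difference of supports with $I$. Since cuts are closed under $\Delta$, and in fact $\delta(U)\,\Delta\,\delta(S)=\delta(U\,\Delta\,S)$, the map $\phi_I$ permutes the vertices of ${\rm Cut}^\Box(G)$. Hence $\phi_I$ is an affine automorphism of the polytope and preserves its $1$-skeleton. It sends $\delta(S)$ to ${\bf 0}$ and $\delta(S')$ to $\delta(T)$, where $T=S\,\Delta\,S'$, and $\delta(T)=I\,\Delta\,J=:K$. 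So it suffices to prove the following claim: for $K=\delta(T)\neq\emptyset$, the segment ${\rm conv}({\bf 0},\delta(T))$ is an edge if and only if $H=(V,E\setminus K)$ has exactly two components.

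A preliminary observation will be used in both directions. Every edge of $H$ lies outside the cut $K$, so it joins two vertices on the same side of $T$. Consequently each component of $H$ lies entirely in $T$ or entirely in $V\setminus T$. Because $K\neq\emptyset$, both sides are nonempty, so $H$ has at least two components.

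\emph{Only if.} Assume $H$ has at least three components. Then one side, say $T$ (otherwise replace $T$ by $V\setminus T$, which defines the same cut), contains at least two components. Split $T=T_1\sqcup T_2$ into nonempty unions of $H$-components. Any $G$-edge between $T_1$ and $T_2$ would lie inside $T$, so it would not belong to $K$ and would be an edge of $H$ joining different components, which is impossible. Therefore
\[
\delta(T)=\delta(T_1)+\delta(T_2).
\]
Both summands are nonzero, because $G$ is connected and $T_i\neq\emptyset,V$. Hence ${\bf 0}+\delta(T)=\delta(T_1)+\delta(T_2)$ with $\{\delta(T_1),\delta(T_2)\}\neq\{{\bf 0},\delta(T)\}$. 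The midpoint of the segment then has a second representation as a sum of two vertices, so the segment is not an edge. This is the argument of Lemma \ref{key}, where $(0,1)$-polytopes have primitive edges.

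\emph{If.} Define $\wb\in\RR^E$ by $w_e=-1$ for $e\in E\setminus K$ and $w_e=0$ for $e\in K$. Then $\wb\cdot x\le 0$ on the cube, and for a vertex $\delta(U)$ equality holds exactly when $\delta(U)\subseteq K$. That condition means no edge of $H$ is cut by $U$, i.e.\ $U$ is a union of components of $H$. If $H$ has exactly the two components $T$ and $V\setminus T$, then $U\in\{\emptyset,T,V\setminus T,V\}$. So the face cut out by $\wb$ is exactly ${\rm conv}({\bf 0},\delta(T))$, which is therefore an edge. I expect no real obstacle here; the only care needed is in the reduction step and in verifying that the decomposition in the forward direction is genuinely different from $\{{\bf 0},\delta(T)\}$.
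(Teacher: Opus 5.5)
Your proof is correct, and its only-if half is essentially the decomposition the paper quotes from \cite{BM}: after your switching normalization $\delta(S)=\mathbf{0}$, the identity $\delta(S)+\delta(S')=\delta(S\,\Delta\,V_1)+\delta(S'\,\Delta\,V_1)$ for a component $V_1$ of $H$ becomes your $\mathbf{0}+\delta(T)=\delta(T_1)+\delta(T_2)$. Your choice of a side containing at least two components makes explicit a point the paper's summary leaves implicit, namely that $V_1$ must differ from $S\,\Delta\,S'$ and its complement, since otherwise the identity is trivial; the paper gives no argument for the if direction, which your supporting vector $\wb$ settles cleanly, and the one case you skip, $I=J$, is immediate because then the two vertices coincide and $H=G$ is connected.
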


In \cite[Proof of Theorem 4.1]{BM},
it was shown that,
if the graph $H=(V, E \setminus (I \ \Delta \ J))$ has connected components 
$(V_1,E_1), \dots, (V_k,E_k)$ with $k \ge 3$
and $K$ is the cut associated with $V_1$,
then 
$\delta(S) + \delta(S') = \delta(T) + \delta(T') $
where $T$ and $T'$ correspond to cuts
$I \ \Delta \ K$ and $J \ \Delta \ K$.
Thus we have the following.

\begin{lemma}
    Let $G$ be a connected graph. Then the cut polytope ${\rm Cut}^\Box(G)$ of $G$ satisfies (E).
\end{lemma}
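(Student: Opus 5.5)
We must show: for $S,S'$ with $\delta(S)\ne\delta(S')$, the segment ${\rm conv}\{\delta(S),\delta(S')\}$ is an edge of ${\rm Cut}^\Box(G)$ if and only if $\delta(S)+\delta(S')$ has a unique representation as a sum of two vertices. Since the ``only if'' part holds for every $(0,1)$-polytope, only the ``if'' part needs proof. Equivalently, if the segment is not an edge, we must exhibit a second decomposition $\delta(S)+\delta(S')=\delta(T)+\delta(T')$ with $\{\delta(T),\delta(T')\}\ne\{\delta(S),\delta(S')\}$.

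Let $I$ and $J$ be the cuts of $S$ and $S'$, and note $I\ \Delta\ J$ is the cut of $S\ \Delta\ S'$. Put $H=(V,E\setminus(I\ \Delta\ J))$. If $\delta(S)\ne\delta(S')$, then $I\ \Delta\ J\neq\emptyset$, so $H$ has at least two components. By Proposition \ref{mmprop}, non-adjacency means $H$ has $k\ge 3$ components $(V_1,E_1),\dots,(V_k,E_k)$. Following the argument cited from \cite{BM}, let $K$ be the cut of $V_1$, and take $T,T'$ to be the vertex sets whose cuts are $I\ \Delta\ K$ and $J\ \Delta\ K$ (concretely, $T=S\ \Delta\ V_1$ and $T'=S'\ \Delta\ V_1$).

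First we verify the identity $\delta(S)+\delta(S')=\delta(T)+\delta(T')$ coordinatewise. Every edge of $K$ leaves $V_1$, and no edge of $H$ does, so $K\subseteq I\ \Delta\ J$. On an edge $e\in K$, exactly one of $I,J$ contains $e$, and passing to $I\ \Delta\ K$, $J\ \Delta\ K$ simply swaps which of the two contains it, so the sum is $1$ on both sides. Off $K$ nothing changes.

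Next we verify that the new pair is genuinely different. Suppose $I\ \Delta\ K=I$; then $K=\emptyset$. But $G$ is connected, so $V_1$ has an outgoing edge, and $K\neq\emptyset$. Suppose instead $I\ \Delta\ K=J$; then $K=I\ \Delta\ J$. Every edge of $I\ \Delta\ J$ joins two distinct components of $H$, so $K=I\ \Delta\ J$ would mean no edge of $G$ joins $V_2$ to $V_3$. Since $G$ is connected and $k\ge3$, we can instead choose the component $V_1$ so that some edge of $I\ \Delta\ J$ avoids it. To do this, contract the components of $H$ to obtain a connected multigraph on $k\ge 3$ nodes, and pick a node that is not adjacent to all edges, for instance a leaf of a spanning tree whose removal leaves an edge. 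With this choice the two decompositions differ, which establishes (E).

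The main obstacle is this genericity check in choosing $V_1$. The rest follows directly from the characterization in Proposition \ref{mmprop}.
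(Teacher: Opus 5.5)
Your proof is correct and follows the same route as the paper. The paper simply invokes the Barahona--Mahjoub construction $T=S\ \Delta\ V_1$, $T'=S'\ \Delta\ V_1$ for a component $V_1$ of $H$ when $H$ has $k\ge 3$ components.

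You go further in two ways. You verify the coordinatewise identity. More importantly, you check that the new pair differs from $\{\delta(S),\delta(S')\}$. The paper leaves this implicit, and it genuinely requires choosing $V_1$ well, for instance as a leaf of a spanning tree of the contracted graph. To see why, take $G$ to be the path $a$--$b$--$c$ with $S=\{b\}$ and $S'=\emptyset$: choosing $V_1=\{b\}$ gives back the original pair.
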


    It is known {\cite[Section 4.2]{Z}} that 
    the cut polytope
    ${\rm Cut}^\Box(K_n)$ of the complete graph $K_n$ is 3-neighborly but not 4-neighborly 
    for all $n\ge 4$.
    In particular, ${\rm Cut}^\Box(K_n)$ does not satisfy ($*$) if $n \ge 4$.
    On the other hand, it is known \cite{Eng} that $I_{{\rm Cut}^\Box(G)}$ is quadratic if and only if $G$ has no $K_4$-minors.

\begin{example}\label{k4}

    Let $G$ be the complete graph $K_4$.
    Then ${\rm Cut}^\Box(G)$ is the convex hull of $\{\ab_1, \dots, \ab_8\}$ where
    $\ab_1 = \delta_G(\emptyset)$, $\ab_2 = \delta_G(\{1,2\})$, $\ab_3 = \delta_G(\{1,3\})$, $\ab_4 = \delta_G(\{1,4\})$, $\ab_5 = \delta_G(\{1\})$, $\ab_6 = \delta_G(\{2\})$, $\ab_7 = \delta_G(\{3\})$, $\ab_8 = \delta_G(\{4\})$.
    Then
    \[
    (\ab_1, \dots, \ab_8) = 
    \left(
    \begin{array}{cccccccc}
    0 & 0 & 1 & 1 & 1 & 1 & 0 & 0\\
    0 & 1 & 0 & 1 & 1 & 0 & 1 & 0\\
    0 & 1 & 1 & 0 & 1 & 0 & 0 & 1\\
    0 & 1 & 1 & 0 & 0 & 1 & 1 & 0\\
    0 & 1 & 0 & 1 & 0 & 1 & 0 & 1\\
    0 & 0 & 1 & 1 & 0 & 0 & 1 & 1
    \end{array}
    \right),
    \]
    where rows of the matrix are indexed by the edges $
    \{1,2\},\{1,3\},\{1,4\},\{2,3\},\{2,4\},\{3,4\}$.
    As stated in \cite[Example 1.1]{SS}, $I_{{\rm Cut}^\Box(G)}$ is generated by $f=x_1x_2x_3x_4 - x_5x_6x_7x_8$, and hence $f$ is indispensable.
\end{example}

We now show that three conditions in \eqref{three_conditions} are equivalent for cut polytopes.

\begin{thm}
    Let $G$ be a connected graph.
    Then the following conditions are equivalent.
    \begin{itemize}
        \item[{\rm (i)}] 
        $G$ has no $K_4$-minors{\rm ;}
    
        \item[{\rm (ii)}] 
        $I_{{\rm Cut}^\Box(G)}$ is quadratic{\rm ;}

        \item[{\rm (iii)}] 
        ${\rm Cut}^\Box(G)$ satisfies the clique-face property $(*)${\rm ;}

        \item[{\rm (iv)}] 
        $I_{{\rm Cut}^\Box(G)}$ has no indispensable monomials of degree $\ge 3${\rm ;}

        \item[{\rm (v)}] 
        $I_{{\rm Cut}^\Box(G)}$ has no indispensable binomials of degree $\ge 3$.
        
    \end{itemize}
\end{thm}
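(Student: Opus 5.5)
The cut polytope theorem will follow the same pattern as Theorem \ref{edge_equivalent}. Most implications are already available, and the one remaining step is handled by reducing to $K_4$ through faces. For (i) $\Leftrightarrow$ (ii) I cite \cite{Eng}. For (ii) $\Rightarrow$ (iii) I use Corollary \ref{zero_one}, since ${\rm Cut}^\Box(G)$ is a $(0,1)$-polytope. For (iii) $\Rightarrow$ (iv) I use Theorem \ref{indispensable} together with the lemma that ${\rm Cut}^\Box(G)$ satisfies (E). For (iv) $\Rightarrow$ (v), recall that both monomials of an indispensable binomial are indispensable monomials, and they have the same degree. So an indispensable binomial of degree $\ge 3$ would give an indispensable monomial of degree $\ge 3$. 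It therefore remains to prove (v) $\Rightarrow$ (i), which I do in contrapositive form: if $G$ has a $K_4$-minor, then $I_{{\rm Cut}^\Box(G)}$ has an indispensable binomial of degree $\ge 3$.

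The plan for this step is to realize ${\rm Cut}^\Box(K_4)$ as a face of ${\rm Cut}^\Box(G)$ and then apply Lemma \ref{cp}(iv) to Example \ref{k4}. That example shows that $f=x_1x_2x_3x_4-x_5x_6x_7x_8$ is an indispensable binomial of degree $4$ for $K_4$. I would use the standard minor-compatible facts about cut polytopes, as in the literature on cut polytopes (Sturmfels--Sullivant \cite{SS}).
\begin{itemize}
\item Contracting an edge $e$: the vertices $\delta_G(S)$ with $d_e=0$ form the face of ${\rm Cut}^\Box(G)$ cut out by the inequality $x_e\ge 0$. This face is affinely isomorphic to ${\rm Cut}^\Box(G/e)$, after deleting the coordinate $x_e$ and merging the coordinates of any resulting parallel edges.
\item Deleting an edge: ${\rm Cut}^\Box(G\setminus e)$ is the coordinate projection of ${\rm Cut}^\Box(G)$ that drops the coordinate $x_e$. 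This projection is a bijection on vertices, because deleting an edge keeps the vertex set.
\end{itemize}

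For deletions I will not argue through faces. Instead I compare toric ideals directly. The binomials of $I_{{\rm Cut}^\Box(G\setminus e)}$ lifted to $G$ are exactly those for which the $e$-coordinates also balance. So $I_{{\rm Cut}^\Box(G)}\subseteq I_{{\rm Cut}^\Box(G\setminus e)}$ under the identification of variables. Indispensability does not obviously transfer along this inclusion, so I will avoid deletions. Since $K_4$ has maximum degree $3$, a $K_4$-minor is the same as a topological $K_4$ minor, that is, a subdivision $H$ of $K_4$ as a subgraph of $G$. I would then try to show that the vertex subset of ${\rm Cut}^\Box(G)$ corresponding to ``cuts of $H$ extended compatibly'' is not needed. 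The cleaner route is the following. Contract edges of $G$ to reduce to a graph $G'$ that contains $K_4$ as a spanning subgraph with extra edges. Lemma \ref{cp}(iv) transfers indispensability from $G'$ to $G$, because contraction gives faces.

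The main obstacle is the remaining case: showing directly that $f$, lifted to ${\rm Cut}^\Box(G')$ for a graph $G'$ on four vertices containing $K_4$, is indispensable. Here the only graph on four vertices containing $K_4$ is $K_4$ itself, with possibly parallel edges merged. Simple-graph contractions of $G$ onto four branch sets therefore give exactly $K_4$. To obtain this reduction, choose a model of $K_4$ in $G$ with branch sets $B_1,\dots,B_4$ that covers all vertices of $G$; this is possible since $G$ is connected. Then contract each $B_i$ to a point. The result is $K_4$, and the chain of contractions exhibits ${\rm Cut}^\Box(K_4)$ as a face of ${\rm Cut}^\Box(G)$. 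Lemma \ref{cp}(iv) together with Example \ref{k4} then gives an indispensable binomial of degree $4$, which proves (v) $\Rightarrow$ (i).
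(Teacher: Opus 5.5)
Your proposal is correct and follows the paper's proof. The chain (ii)$\Rightarrow$(iii)$\Rightarrow$(iv)$\Rightarrow$(v) comes from Corollary~\ref{zero_one}, from property (E) together with Theorem~\ref{indispensable}, and from homogeneity. For (v)$\Rightarrow$(i), you realize ${\rm Cut}^\Box(K_4)$ as a face of ${\rm Cut}^\Box(G)$ by contractions and then apply Lemma~\ref{cp}(iv) to Example~\ref{k4}.

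Choosing branch sets that cover $V(G)$, which uses connectivity, is a useful justification of the paper's one-line claim that contractions alone suffice. You can drop the aside on deletions. It is unused, and its claim that the projection is a bijection on vertices fails when $e$ is a bridge.
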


\begin{proof}
Recall that ``(i) $\Leftrightarrow$ (ii)" was shown in \cite{Eng}.
    Since ${\rm Cut}^\Box(G)$ satisfies (E), we have
    ``(ii) $\Rightarrow $ (iii) $\Rightarrow $ (iv) $\Rightarrow $ (v)".
Hence 
it is enough to show that ``(v) $\Rightarrow $ (i)".
Suppose that $G$ has $K_4$ as a minor.
Since $G$ is connected and $K_4$ is complete, $K_4$ can be obtained from $G$ by a sequence of edge contractions. 
Since edge contraction corresponds to taking a face of a cut polytope, ${\rm Cut}^\Box(K_4)$ is a face of ${\rm Cut}^\Box(G)$.
As explained in Example \ref{k4},  
the toric ideal of ${\rm Cut}^\Box(K_4)$ has an indispensable binomial $f$ of degree $4$.
Since
${\rm Cut}^\Box(K_4)$ is a face of ${\rm Cut}^\Box(G)$, $f$ is an indispensable binomial in $I_{{\rm Cut}^\Box(G)}$ by Lemma \ref{cp} (iv).
\end{proof}

\section{Classes of polytopes whose toric ideals are expected to be quadratic}

In this section, we study classes of lattice polytopes whose toric ideals are conjectured to be quadratic.

\subsection{Simple polytopes}

One of the most important conjectures on toric ideals is the B{\o}gvad Conjecture.

\begin{conjecture}[the B{\o}gvad Conjecture]
The toric ideal of any smooth polytope is quadratic.
\end{conjecture}

Since any smooth polytope is simple, we study the clique-face property ($*$) for 
simple polytopes (that is not necessarily a lattice polytope).

\begin{prop}[{\cite[Theorem 3.6.6]{PGbook}}]
Let $P$ be a simple polytope.
    Then every induced cycle of length $\le 5$
    in ${\rm sk}(P)$ corresponds to a face of $P$.
\end{prop}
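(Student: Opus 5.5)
We would prove the proposition by passing to the polar polytope, where simplicity becomes simpliciality and the statement becomes a counting argument on vertex sets of facets. Assume $P$ is a simple $d$-polytope and take the origin in its interior. Let $P^*$ be the polar polytope, which is simplicial. Vertices of $P$ correspond to facets of $P^*$, and two vertices of $P$ are adjacent in ${\rm sk}(P)$ exactly when the corresponding facets share a ridge. Since $P^*$ is simplicial, this means their vertex sets, each of size $d$, meet in exactly $d-1$ vertices. Faces of $P$ of dimension $j$ correspond to faces of $P^*$ of dimension $d-1-j$, and the vertices of a face $G$ of $P$ correspond to the facets of $P^*$ containing the dual face. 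So we must show the following. If facets $F_1,\dots,F_k$ with $3\le k\le 5$ form an induced cycle in the facet-adjacency graph, then there is a face $R$ of $P^*$ such that $\{F_1,\dots,F_k\}$ is exactly the set of facets containing $R$.

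\emph{Step 1: a common $(d-2)$-set.} Regard the cycle as a walk $F_1\to F_2\to\cdots\to F_k\to F_1$ in which each step removes one vertex and adds one vertex. Nonadjacency of $F_i,F_j$ with $j\ne i\pm1$ means $|F_i\cap F_j|\le d-2$. The intersection $R:=\bigcap_i F_i$ is $F_1$ minus the vertices of $F_1$ that are removed at some step. Every such vertex must be re-added before the walk closes, and each step contributes only one removal and one addition. Hence the vertices of $F_1$ that ever leave, together with the auxiliary vertices that enter and later leave, consume at least two events each. We then bound the number of vertices of $F_1$ that ever leave by $2$ when $k\le 5$, using the total of $2k\le 10$ events together with the nonadjacency constraints $|F_i\cap F_j|\le d-2$ for chords. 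The latter constraints kill the configurations that would otherwise allow three vertices of $F_1$ to leave, since such a walk forces two nonconsecutive facets to differ by a single swap. This gives $|R|\ge d-2$. On the other hand, $|R|=d-1$ is impossible: then all $F_i$ would contain a common ridge, and only two facets contain a given ridge, whereas $k\ge 3$. So $R$ is a set of $d-2$ vertices contained in every $F_i$. Because $P^*$ is simplicial and $R$ lies in a facet, $R$ is a $(d-3)$-face.

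\emph{Step 2: identifying the cycle with the link.} The facets of $P^*$ containing the $(d-3)$-face $R$ are in bijection with the edges of the link of $R$, which is a polygon. Two such facets share a ridge exactly when the corresponding link edges are consecutive, so their adjacency graph is a single cycle. Our $F_i$ form an induced cycle inside this cycle graph. An induced cycle of length $\ge 3$ inside a cycle graph must be the whole cycle. Therefore $\{F_1,\dots,F_k\}$ is precisely the set of facets containing $R$. Dually, $C$ is the vertex set, and indeed the boundary, of a $2$-face of $P$.

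\emph{Expected obstacle.} The main difficulty is the combinatorial case analysis in Step 1 for $k=4,5$: proving that at most two vertices of $F_1$ ever leave. This is exactly where the bound $k\le 5$ is used, since for $k=6$ three vertices can leave, as in products of triangles. We would first enumerate the possible swap sequences by the pattern in which removed vertices are re-added. For each pattern with three or more leaving vertices, we would exhibit a chord pair $(F_i,F_j)$ with $|F_i\cap F_j|=d-1$, contradicting inducedness.
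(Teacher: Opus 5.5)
The paper does not prove this proposition; it quotes it from \cite[Theorem 3.6.6]{PGbook}. Your argument therefore has to stand on its own, and it does not, because of a gap in Step~1.

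The parts that work are these: the passage to the simplicial polar, the exclusion of $|R|=d-1$, the count giving $|R|\ge d-2$ for $k=3,4$, and Step~2 (the link of a $(d-3)$-face is a polygon, and a cycle inside a cycle graph is the whole cycle). The gap is the central claim of Step~1 for $k=5$. It is false as a statement about swap walks with chord constraints, so no enumeration of patterns can prove it.

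Here is a configuration it fails to exclude. Fix a $(d-3)$-set $R'$ and five further vertices $1,\dots,5$, and set $F_i=R'\cup\{i,i+1,i+2\}$ with indices modulo $5$. Consecutive $F_i$ meet in $d-1$ vertices and nonconsecutive ones meet in exactly $d-2$, so there is no chord. Yet the three vertices $1,2,3$ of $F_1$ all leave, and $\bigcap_i F_i=R'$ has only $d-3$ elements. Your own event count, with two auxiliary vertices forced because $F_3$ is two swaps from $F_1$ but not adjacent to it, only gives at most three leaving vertices. A short check shows that this configuration is, up to relabeling, the only walk with three. So your planned case analysis ends exactly here, with no chord pair to exhibit.

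What rules this configuration out is the topology or geometry of $\partial P^*$, not the adjacency constraints. The five triangles $\{i,i+1,i+2\}$ form a M\"obius band. Even the pseudomanifold property (each ridge lies in exactly two facets) does not exclude it, since this band sits inside the six-vertex triangulation of the real projective plane.

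You need an extra ingredient, for example the following. In the bad case all $F_i$ contain the $(d-4)$-face $R'$. The link of $R'$ in $\partial P^*$ is the boundary complex of a simplicial $3$-polytope, polar to the $3$-face of $P$ dual to $R'$. Every pair from $\{1,\dots,5\}$ lies in some triple $\{i,i+1,i+2\}$, so the graph of this link would contain $K_5$, contradicting planarity.

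Equivalently, in the bad case the five cycle vertices lie in a common simple $3$-face of $P$, so it suffices to settle $d=3$ by a planarity argument. A cubic $3$-connected graph is $3$-edge-connected. Hence a non-facial induced cycle has at least three outgoing edges on each side of the Jordan curve, and so has length at least $6$. With this step added, the rest of your argument goes through.
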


In particular, any triangle of ${\rm sk}(P)$ corresponds to a face of a simple polytope $P$.   
We extend it to general cliques of ${\rm sk}(P)$.
We need the following fact.

\begin{prop}[{\cite[Theorem 2.8.6]{PGbook}}]
\label{ItsAFace}
Let $P$ be a simple $d$-polytope and let $0 \le k \le d-1$.
Suppose that $v$ is a vertex of $P$ and ${\rm conv}(v,v_1),\ldots,{\rm conv}(v,v_k)$ are $k$ edges of $P$ that are incident with $v$,
and $F$ is the smallest face of $P$ containing these edges.
    Then $F$ is a simple $k$-face of $P$.
\end{prop}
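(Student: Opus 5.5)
The plan is to work with the vertex figure of $P$ at $v$. Choose a hyperplane $H$ that strictly separates $v$ from the other vertices of $P$, and set $P/v := P \cap H$. The standard correspondence gives an inclusion-preserving bijection between faces of $P$ containing $v$ and faces of $P/v$, with $G \mapsto G \cap H$ and $\dim(G\cap H) = \dim G - 1$. Under this bijection, the edges of $P$ at $v$ correspond to the vertices of $P/v$.

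First, I will use simplicity. Since $v$ lies on exactly $d$ edges, $P/v$ is a $(d-1)$-polytope with exactly $d$ vertices, so $P/v$ is a $(d-1)$-simplex.

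Second, let $u_1,\dots,u_k$ be the points of $P/v$ corresponding to ${\rm conv}(v,v_1),\dots,{\rm conv}(v,v_k)$. Because $P/v$ is a simplex, ${\rm conv}(u_1,\dots,u_k)$ is a $(k-1)$-face of $P/v$, and it is the smallest face containing $u_1,\dots,u_k$. A face of $P$ contains the edge ${\rm conv}(v,v_i)$ if and only if it contains $v$ and its trace on $H$ contains $u_i$. So the bijection sends the smallest face $F$ of $P$ containing the given edges to ${\rm conv}(u_1,\dots,u_k)$. Hence $\dim F = k$. Moreover, $F$ contains exactly these $k$ edges at $v$, since no other $u_j$ lies in that simplex face.

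Third, I need $F$ to be simple, meaning every vertex $w$ of $F$ lies on exactly $k$ edges of $F$. Since $P$ is simple at $w$, the vertex figure $P/w$ is again a $(d-1)$-simplex. The face $F$ contains $w$, so it corresponds to a $(k-1)$-face of $P/w$. That face has exactly $k$ vertices, which correspond to the edges of $P$ at $w$ lying in $F$. These are exactly the edges of $F$ at $w$, because the edges of a face are the edges of $P$ contained in it. So $F$ is a simple $k$-face.

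The main obstacle is stating the vertex-figure correspondence carefully, in particular that faces of $P$ through $v$ correspond dimension-shiftingly to faces of $P/v$. I would cite this as standard (e.g.\ Ziegler's Lectures on Polytopes, Prop.~2.4) rather than reprove it. The case $k=0$ is trivial, with $F=\{v\}$.
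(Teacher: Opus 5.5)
Your argument is correct and complete: the vertex figure at any vertex of a simple $d$-polytope is a $(d-1)$-simplex, and in a simplex every set of vertices spans a face. So the $k$ edges at $v$ span a unique $k$-face, and repeating the argument at each vertex $w$ of $F$ gives simplicity (with the convention $F=\{v\}$ for $k=0$, as you note). The paper gives no proof of its own here and simply quotes \cite[Theorem 2.8.6]{PGbook}; your vertex-figure argument is the standard proof of that textbook fact, so nothing is missing.
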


From this, we show that any simple polytope satisfies $(*)$.

\begin{thm}\label{simple_is_OK}
Let $P \subset \RR^d$ be a simple polytope.
Then \(P\) has the clique-face property \((*)\).
\end{thm}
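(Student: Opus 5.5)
Let $C=\{v,v_1,\dots,v_k\}$ be a clique of ${\rm sk}(P)$ with $k+1$ vertices, where $d=\dim P$. The plan is to show that $C$ is exactly the vertex set of the face supplied by Proposition~\ref{ItsAFace}, and that this face is a $k$-simplex.

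\textbf{Step 1 (reduction to $k\le d$).} Since $v$ has exactly $d$ neighbors in ${\rm sk}(P)$, we have $k\le d$. If $k=d$, then $C$ consists of $v$ together with all its neighbors. Each $v_i$ also has degree $d$ and is adjacent to all of $C\setminus\{v_i\}$, so every vertex of $C$ has all its neighbors inside $C$. Since ${\rm sk}(P)$ is connected (Balinski), $C=V(P)$ and $C$ is the vertex set of $P$ itself. So we may assume $k\le d-1$.

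\textbf{Step 2 (the face $F$).} Let $F$ be the smallest face of $P$ containing the edges ${\rm conv}(v,v_i)$ for $i=1,\dots,k$. By Proposition~\ref{ItsAFace}, $F$ is a simple $k$-face of $P$. Its vertex $v$ therefore has exactly $k$ neighbors in ${\rm sk}(F)$, and these must be $v_1,\dots,v_k$, because those edges lie in $F$. Hence $C\subseteq V(F)$.

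\textbf{Step 3 (all of $F$ lies in $C$).} The key observation is that ${\rm sk}(F)$ is the induced subgraph of ${\rm sk}(P)$ on $V(F)$: an edge of $P$ joining two vertices of a face lies in that face, since a face is extreme. Hence $C$ is a clique in ${\rm sk}(F)$, and ${\rm sk}(F)$ is $k$-regular.

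Each $v_i$ is adjacent within $F$ to the other $k$ vertices of $C$, which already uses up its degree $k$. So every vertex of $C$ has all of its $F$-neighbors inside $C$. Because ${\rm sk}(F)$ is connected, $V(F)=C$.

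Thus $F$ is a $k$-polytope with exactly $k+1$ vertices, namely a simplex, and $C$ is its vertex set.

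\textbf{Expected difficulty.} The only delicate points are the degree-counting argument, which needs ${\rm sk}(F)$ to be an induced subgraph, and the connectivity of 1-skeletons. Both are standard, so I do not expect a serious obstacle.
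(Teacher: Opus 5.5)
Your proof is correct and follows essentially the same route as the paper. Both apply Proposition~\ref{ItsAFace} to obtain a simple $k$-face $F$ containing the edges ${\rm conv}(v,v_i)$, then use that ${\rm sk}(F)$ is the induced subgraph of ${\rm sk}(P)$ on $V(F)$, that it is $k$-regular, and that it is connected, to force $V(F)=C$. The only cosmetic difference is that you treat $k=d$ separately by running the same degree argument on $P$ itself, whereas the paper simply sets $F=P$ there.
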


\begin{proof}
Let \(C=\{v,v_1,\ldots,v_k\}\) be a \((k+1)\)-clique of \(\operatorname{sk}(P)\).
Since \(P\) is simple, we have \(k\le d\). If \(k=d\), set \(F=P\).
If \(k<d\), then by Proposition \ref{ItsAFace}, there exists a simple \(k\)-face \(F\)
of \(P\) containing the edges
${\rm conv}(v,v_1),\ldots,{\rm conv}(v,v_k)$.
Since \(F\) is a face of \(P\), every edge of \(P\) whose endpoints lie in
\(F\) is an edge of \(F\). Hence \(C\) is a \((k+1)\)-clique contained in
\({\rm sk}(F)\).

Moreover, since \(F\) is a simple \(k\)-polytope, every vertex of \(F\)
has degree \(k\) in \({\rm sk}(F)\). As \(C\) is a
\((k+1)\)-clique contained in \({\rm sk}(F)\), each vertex of
\(C\) is already adjacent to \(k\) vertices in \(C\). Thus no edge of
\({\rm sk}(F)\) joins a vertex of \(C\) to a vertex outside
\(C\). Since \({\rm sk}(F)\) is connected, \(C\) is the vertex
set of \(F\). Therefore \(C\) corresponds to the face \(F\).
\end{proof}

\subsection{Matroid polytopes}

First, based on the textbook \cite{Oxley}, we introduce the concept of matroids.
A \emph{matroid} $\mathcal{M}$ is an ordered pair $(E,\mathcal{I})$ where $E$ is finite and $\mathcal{I} \subseteq 2^E$ satisfies the following three conditions:
\begin{enumerate}
    \item $\emptyset \in \mathcal{I}$;
    \item if $I \in \mathcal{I}$ and $I' \subset I$, then $I' \in \mathcal{I}$;
    \item if $I_1, I_2 \in \mathcal{I}$ and $|I_1| < |I_2|$, then there exists an element $x \in I_2 \setminus I_1$ such that $I_1 \cup \{x \}\in \mathcal{I}$.
\end{enumerate}
The set $E$ is called the \emph{ground set} of $\mathcal{M}$ and the elements of $\mathcal{I}$ are called the \emph{independent sets} of $\mathcal{M}$.
The \emph{rank} of a matroid $\mathcal{M} = (E,\mathcal{I})$ is defined as 
\[
    {\rm rank} (\mathcal{M}) = \max_{I \in \mathcal{I}} |I|.
\]
An element of $\mathcal{I}$ of cardinality ${\rm rank} (\mathcal{M})$ is called a \emph{basis} of $\mathcal{M}$.
Let ${\mathcal B}$ be the set of all bases of ${\mathcal M}$.
Given a subset $ I \subset E=\{e_1,\dots,e_n\}$, 
we set $\rho(I) = \sum_{e_i \in I} {\bf e}_i \in \{0,1\}^n$.
Given a matroid $\mathcal{M}=(E,\mathcal{I})$ with $E=\{e_1,\ldots,e_n\}$,
the {\em independence polytope} ${\mathcal P}({\mathcal M})$ of ${\mathcal M}$ is the convex hull of
$$
\{ \rho (I) : I \in {\mathcal I}\},
$$
and the {\em basis polytope} ${\mathcal P}_{B} ({\mathcal M})$ of ${\mathcal M}$ is the convex hull of
$$
\{ \rho (B) : B \in {\mathcal B}\}.
$$

White's conjecture \cite{WC} states that the toric ideal of every matroid base polytope is generated by quadratic binomials arising from symmetric exchanges. Although this conjecture is now known to be false, the following weaker conjecture remains open.

\begin{conjecture}\label{Mconj}
The toric ideal of any matroid base polytope is quadratic.
\end{conjecture}

\begin{Remark} The conjecture that every matroid base polytope has a quadratic Gr\"obner basis was recently disproved independently in \cite{Tsuika, DFMR}. In \cite{DFMR}, cliques in the \(1\)-skeleton of the base polytope of the Fano matroid play an important role. After the first version of this paper was posted, Larson \cite{Las} constructed a counterexample to White's symmetric exchange conjecture. This counterexample does not disprove Conjecture~4.5, because it only shows that the symmetric exchange binomials do not generate the toric ideal. 
\end{Remark}

Edges of matroid polytopes are characterized as follows.

\begin{prop}[{\cite[Theorem 4.3]{HK}}]\label{edge_ind}
    Let ${\mathcal M}$ be a matroid.
    Then ${\rm conv}(\rho(I), \rho(I'))$ with $I,I' \in {\mathcal I}$ is an edge of ${\mathcal P}({\mathcal M})$ if and only if 
    one of the following conditions holds:
    \begin{itemize}
        \item[{\rm (i)}]
        $|I \ \Delta\  I'| = 1$, or
        \item[{\rm (ii)}]
        $|I \ \Delta\  I'| = 2$ and $I \cup I' \notin {\mathcal I}$.
    \end{itemize}
\end{prop}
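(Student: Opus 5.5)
We prove both directions with elementary matroid arguments. For the ``if'' direction we exhibit a weight vector that cuts out exactly $\{\rho(I),\rho(I')\}$. For the ``only if'' direction we produce a nontrivial quadratic relation and apply Lemma~\ref{key}. Since ${\mathcal P}({\mathcal M})$ is a $(0,1)$-polytope, every edge is primitive and its lattice points are exactly the $\rho(J)$ with $J\in{\mathcal I}$, so Lemma~\ref{key} applies directly. Throughout, write $I=J\cup A$ and $I'=J\cup B$ with $J=I\cap I'$ and $A\cap B=\emptyset$.

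\emph{Sufficiency.} In case (i), say $I'=I\cup\{x\}$. Take $\wb$ with $w_e=1$ for $e\in I$, $w_x=0$, and $w_e=-N$ otherwise, for $N$ large. The maximum of $\wb\cdot\rho(K)$ over $K\in{\mathcal I}$ is $|I|$. It is attained exactly when $I\subseteq K\subseteq I\cup\{x\}$, that is, at $\rho(I)$ and $\rho(I')$.

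In case (ii), the sets $A$ and $B$ cannot be $\emptyset$ and a $2$-set. Indeed, if $I\subset I'$ then $I\cup I'=I'\in{\mathcal I}$, which (ii) excludes. Hence $A=\{x\}$, $B=\{y\}$, and $J\cup\{x,y\}\notin{\mathcal I}$. Take $w_e=M$ for $e\in J$, $w_x=w_y=1$, and $w_e=-N$ otherwise, with $N\gg M\gg 1$. A maximizer must contain $J$ and avoid all other elements outside $J\cup\{x,y\}$. Since $J\cup\{x,y\}$ is dependent, it then contains exactly one of $x,y$. So the face is ${\rm conv}(\rho(I),\rho(I'))$, and it is an edge.

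\emph{Necessity.} Assume ${\rm conv}(\rho(I),\rho(I'))$ is an edge and neither (i) nor (ii) holds. It suffices to find $K,K'\in{\mathcal I}$ with $\rho(I)+\rho(I')=\rho(K)+\rho(K')$ and $\{K,K'\}\neq\{I,I'\}$. Then $x_Ix_{I'}-x_Kx_{K'}$ is a nonzero binomial of $I_{{\mathcal P}({\mathcal M})}$, and Lemma~\ref{key} gives a contradiction. Equality of sums holds automatically whenever $K\cup K'=I\cup I'$ and $K\cap K'=J$ as multisets, which is the case for all the choices below. We split into three cases.

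If $|A|=|B|=1$, then $I\cup I'\in{\mathcal I}$, since (ii) fails. Take $K=J$ and $K'=I\cup I'$.

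If $|I|<|I'|$ (the case $|I|>|I'|$ is symmetric), then $|A\Delta B|\ge 2$ forces $|B|\ge 2$. The augmentation axiom gives $b\in I'\setminus I=B$ with $I+b\in{\mathcal I}$. Take $K=I+b$ and $K'=I'-b$; the set $K'$ is independent as a subset of $I'$. We have $K\neq I'$ because $|B|\ge 2$, so the pair is new.

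If $|I|=|I'|$ and $|A|=|B|\ge 2$, we use the symmetric exchange property. Consider the truncation of ${\mathcal M}|_{I\cup I'}$ to rank $|I|$; this is again a matroid, and $I,I'$ are bases of it. Symmetric basis exchange then gives, for $a\in A$, some $b\in B$ with $K=I-a+b$ and $K'=I'-b+a$ both bases, hence both in ${\mathcal I}$. Because $|A|\ge 2$, we have $K\neq I'$ and $K\neq I$.

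The main obstacle is this last equal-size case. Simple augmentation only yields one of the two required independent sets, so the argument genuinely needs the symmetric exchange property, applied inside the truncated restriction. I would check carefully that truncation preserves the matroid axioms and that the exchanged sets are independent in ${\mathcal M}$ itself, which holds because independence in the truncation implies independence in ${\mathcal M}$.
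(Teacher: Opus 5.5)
The paper gives no proof of this proposition. It is quoted from Hausmann--Korte \cite[Theorem 4.3]{HK}, where it comes out of a general colouring criterion for adjacency on $0$-$1$ polyhedra. Your argument is a correct, self-contained alternative.

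\emph{Sufficiency.} In both cases your weight vector is maximized over $\mathcal{I}$ exactly at $I$ and $I'$. In case (ii) this uses that $J$ alone scores only $M|J|<M|J|+1$ and that $J\cup\{x,y\}$ is dependent.

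\emph{Necessity.} Lemma~\ref{key} applies, since $\mathcal{P}(\mathcal{M})$ is a $(0,1)$-polytope whose lattice points are exactly the $\rho(K)$ with $K\in\mathcal{I}$. Your case split is exhaustive: $|A|=|B|=1$; $|A|\neq|B|$, where failure of (i) forces the larger side to have at least two elements; and $|A|=|B|\ge 2$. In every case $K\cup K'=I\cup I'$ and $K\cap K'=J$, and $K\notin\{I,I'\}$. The only non-elementary input is the symmetric basis exchange theorem. Applying it in the truncation of $\mathcal{M}|_{I\cup I'}$ to rank $|I|$ is legitimate, because independence there implies independence in $\mathcal{M}$.

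\emph{Comparison.} Beyond removing the external citation, your route gives more. The necessity half never uses the edge hypothesis: it shows directly that whenever (i) and (ii) both fail, $\rho(I)+\rho(I')$ has a second decomposition as a sum of two vertices. Combined with your sufficiency half, this proves property (E) for matroid independence polytopes, which the paper instead imports from \cite{propertyE}. The Hausmann--Korte route places the statement inside a uniform adjacency criterion for $0$-$1$ polyhedra. That criterion covers other combinatorial families as well, but it is less direct for this particular statement.
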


\begin{thm}
    Let $\mathcal{M}$ be a matroid.
    Then ${\mathcal P}({\mathcal M})$ satisfies the clique-face property ($*$).
\end{thm}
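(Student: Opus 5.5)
The plan is to reduce to a small combinatorial configuration using faces cut out by coordinate inequalities, and then to exhibit an explicit supporting functional. Recall that ${\mathcal P}({\mathcal M})=\{x\ge 0 : x(A)\le r(A)\ \text{for all } A\subset E\}$. Hence each of the sets $\{x_e=0\}$, $\{x_e=1\}$ (for a non-loop $e$) and $\{x(A)=r(A)\}$ cuts out a face.

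Let $C=\{\rho(I_1),\dots,\rho(I_s)\}$ be a clique of ${\rm sk}({\mathcal P}({\mathcal M}))$, and put $L=\bigcap_k I_k$ and $U=\bigcup_k I_k$.

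\emph{First reduction.} Intersect with the face $\{x_e=0 : e\notin U\}\cap\{x_e=1 : e\in L\}$. This face is affinely isomorphic, via $I\mapsto I\setminus L$, to the independence polytope of the minor ${\mathcal M}'=({\mathcal M}/L)|(U\setminus L)$. By Proposition \ref{edge_ind}, applied in both matroids, adjacency is preserved. Since faces of faces are faces, we may therefore assume $L=\emptyset$ and $U=E$.

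\emph{Structure of the clique.} By Proposition \ref{edge_ind}, any two members of $C$ satisfy $|I\,\Delta\, I'|\le 2$. If $|I\,\Delta\,I'|=1$, the two sizes differ by one and the sets are nested. If $|I\,\Delta\,I'|=2$, the two sets have equal size and $I\cup I'$ is dependent. Consequently all sizes lie in $\{k,k+1\}$ for some $k$.

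\emph{Classification.} Using $L=\emptyset$ together with the classical description of families of equicardinal sets with pairwise $|\Delta|=2$ (either a sunflower around a common core of size $k-1$, or all $k$-subsets of a $(k+1)$-set), I expect the following to hold. Either $|E|$ is tiny and $C$ is a handful of sets such as $\emptyset,\{a\},\{b\}$; or $C$ consists of singletons $\{a_1\},\dots,\{a_t\}$ spanning a rank-one set, possibly together with $\emptyset$; or $C$ consists of the $k$-subsets of $E$ with $|E|=k+1$ and $E$ dependent, possibly together with one $(k-1)$-set. The sunflower case should be killed by $L=\emptyset$ unless $k\le 1$.

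\emph{Supporting functionals.} In each case I will write down the supporting face explicitly.
\begin{itemize}
\item Rank-one case: the face is $\{x(E)=1\}$, intersected with $\{x_e=0\}$ if needed; when $\emptyset\in C$, instead use $\{x_e=0 : e\notin U\}$, which works if $r(U)=1$.
\item $k$-subsets case: use $\{x(E)=r(E)\}$. Here the dependency of $E$ forces $r(E)=k$, and the pairwise dependence of unions is exactly what keeps the maximizers inside $C$.
\end{itemize}
In general I would take $w=\mathbf 1_E$ on the reduced minor, possibly perturbed by coordinates $\pm\varepsilon\,\eb_e$, and check that the maximizers are precisely $C$. One checks that these faces are simplices, in accordance with the remark in the Introduction.

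\emph{Main obstacle.} I expect the main difficulty to be the classification step: showing that mixed-size cliques (sizes $k$ and $k+1$ both present) are so constrained that a single tight set $A$ with $x(A)=r(A)$ exactly captures $C$. I would first try the pair $A=U$ and $w=\mathbf 1_U$. I would then verify, via the exchange axiom, that every independent set of size $r(U)$ inside $U$ is already in $C$; otherwise one can produce a set in $C$ that is non-adjacent to some member, contradicting Proposition \ref{edge_ind}.
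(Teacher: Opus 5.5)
Your strategy is sound, and once the classification is proved it runs parallel to the paper's proof. After contracting $L$ and deleting $E\setminus U$, the paper's Cases 1.1, 1.2 and 2 become exactly your three configurations: all $k$-subsets of a circuit; pairwise parallel singletons; and $\emptyset$ together with pairwise parallel singletons. The paper's vectors $\wb$ are your coordinate faces $\{x_e=1\}$ and $\{x_e=0\}$ (the large $\mu$ in Case 1.2 enforces these), combined in Cases 1.1 and 1.2 with a tight rank inequality. The real difference is that your minor reduction replaces the paper's restriction to maximal cliques. The paper needs maximality in Case 1.1 to know that every independent $k$-subset of $I\sqcup\{x,y\}$ lies in $F$. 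With $L=\emptyset$, all $k+1$ of the $k$-subsets of $E$ are forced to be present, because their intersection is $E$ minus the omitted elements.

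The weak point is the step you call the main obstacle, and your stated plan for it would fail. A clique containing sets of both sizes is never cut out by $\{x(U)=r(U)\}$, since $x(U)$ equals $k<r(U)$ at the smaller set. What is needed instead is the following short argument.
\begin{itemize}
\item If $B\in C$ has size $k+1$, then every $k$-set $S\in C$ has $|S\,\Delta\,B|$ odd and at most $2$, so $S\subset B$.
\item Two distinct $k$-sets $S,S'\in C$ would then have $S\cup S'\subseteq B$ independent, violating (ii) of Proposition~\ref{edge_ind}.
\item So $C$ has exactly one $k$-set. It is contained in every member, hence equals $L$, which is $\emptyset$ after your reduction.
\item Thus $k=0$ and $C=\{\emptyset,\{b_1\},\dots,\{b_t\}\}$ with $r(E)=1$, and ${\rm conv}(C)$ is the whole reduced polytope. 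Before reduction this is the paper's Case 2 vector $\sum_{e_i\in I_1}\eb_i-\sum_{e_i\notin F'}\eb_i$.
\end{itemize}
The same argument shows that your configuration ``$k$-subsets of $E$ together with one $(k-1)$-set'' occurs only for $k=1$, where it is this rank-one case. So $\{x(E)=r(E)\}$ is needed only for the pure circuit case, where it is correct. With this argument in place of your last paragraph, the proof is complete.
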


\begin{proof}
It suffices to show that every maximal clique is the vertex set of a simplex face. Indeed, every clique is contained in a maximal clique, and every subset of the vertex set of a simplex face is again the vertex set of a face. 
Let $F =\{I_1, \dots, I_r\} \subset {\mathcal I}$ be a maximal clique of ${\rm sk}({\mathcal P}({\mathcal M}))$.
The assertion is clear for cliques of cardinality at most two, so we may assume that \(r>2\).
Let $k= \min_{I \in F} |I|$.
From Proposition \ref{edge_ind}, we have
$k \le \max_{I \in F} |I| \le k+1 .$

\bigskip

\noindent
{\bf Case 1} ($\max_{I \in F} |I|=k$){\bf .}
Then every $I_i , I_j \in F$ with $i \ne j$ satisfy condition (ii) in Proposition \ref{edge_ind}.
Let $I = I_1 \cap I_2$ and $I_1 \ \Delta\  I_2 = \{x,y\}$
where $x \in I_1$ and $y \in I_2$.
Then $I_1 \cup I_2 = I \sqcup  \{x,y\}$ does not belong to ${\mathcal I}$.

\bigskip

\noindent
{\bf Case 1.1} ($x \in I_3$){\bf .}
Suppose that $y \notin I_3$.
Since $x \notin I_2$ and $|I_2 \ \Delta\  I_3| =2$, we have
$I_2 \ \Delta\  I_3 = \{x,y\}$.
Hence $I_1 = I_3$, a contradiction.
Thus we have $y \in I_3$.
Hence $I_3 = \{x,y\} \sqcup ( I \setminus \{z\})$
for some $z \in I$.
By the same argument, it follows that each $I_i$ ($1 \le i \le r$) is of the form
$I_i = ( \{x,y\} \sqcup I) \setminus \{z\}$ for some $z \in \{x,y\} \sqcup I$.
Conversely, if $I' \in {\mathcal I}$ is of the form 
$I' = ( \{x,y\} \sqcup I) \setminus \{z\}$ for some $z \in \{x,y\} \sqcup I$, then $I'$ and $I_i \ (\ne I')$ with $1 \le i \le r$ satisfy condition (ii) in Proposition \ref{edge_ind}.
Since $F$ is maximal, we have $I' \in F$.
Let 
$${\bf w} = \sum_{e_i \in  I \sqcup  \{x,y\}} \eb_i - \sum_{e_i \notin  I \sqcup  \{x,y\}} \eb_i.$$
Then the inner product ${\bf w} \cdot \rho(I)$ with $I \in {\mathcal I}$
is less than or equal to $k$, 
and equal to $k$ if and only if $I$ belongs to $F$.
Thus $F$ corresponds to a face of ${\mathcal P}({\mathcal M})$.
Moreover, the vertices of this face are of the form \[ \rho(I\sqcup\{x,y\})- \rho(\{z\}), \] and hence they are affinely independent. Thus, \(F\) is the vertex set of a simplex face of 
${\mathcal P}({\mathcal M})$.

\bigskip

\noindent
{\bf Case 1.2} ($x \notin I_3$){\bf .}
Since $x \in I_1$ and  $|I_1 \ \Delta\  I_3| =2$, we have $I_3 = I \sqcup \{z\}$
for some $z \notin I$.
Since $I_2 \ne I_3$, $z \ne y$.
By the same argument, it follows that each $I_i$ ($1 \le i \le r$) is of the form
$I_i =I \sqcup \{z\}$ for some $z \notin I$.
Let $F' = \bigcup_{i=1}^r I_i$.
Note that $I \cup \{\alpha, \beta\}$ with $\alpha , \beta \in F' \setminus I$ does not belong to ${\mathcal I}$.
Let 
$${\bf w} = \mu \sum_{e_i \in  I} \eb_i +
\sum_{e_i \in  F' \setminus I} \eb_i - \mu \sum_{e_i \notin F'} \eb_i,$$
where $\mu$ is a sufficiently large number.
Then the inner product ${\bf w} \cdot \rho(I)$ with $I \in {\mathcal I}$
is less than or equal to $\mu (k-1) +1$, 
and equal to $\mu (k-1) +1$ if and only if $I$ belongs to $F$.
Thus $F$ corresponds to a face of ${\mathcal P}({\mathcal M})$.
Moreover, its vertices are of the form \[ \rho(I)+\rho(\{z\}), \] and hence they are affinely independent. Thus, \(F\) is the vertex set of a simplex face of \({\mathcal P}({\mathcal M})\).

\bigskip

\noindent
{\bf Case 2} ($\max_{I \in F} |I|=k+1$){\bf .}
Let $I_1 \in F$ with $|I_1| =k$, and $I_2 \in F$ with $|I_2| =k+1$.
Then $I_1 \cup \{x\} = I_2$ for some $x \in E$.
Suppose that $|I_3|=k$.
Then $I_3 \cup \{y\} = I_2$ for some $y \in I_1$.
Hence $I_1 \cup I_3 = I_2$ belongs to ${\mathcal I}$.
This contradicts condition (ii) in Proposition \ref{edge_ind}
for $I_1$ and $I_3$.
Thus $|I_i|=k+1$ and hence $I_1 \subsetneq I_i$ for each $2 \le i \le r$.
Let $F' = \bigcup_{i=1}^r I_i$.
Note that $I_1 \cup \{\alpha, \beta\}$ with $\alpha , \beta \in F' \setminus I_1$ does not belong to ${\mathcal I}$.
Let 
$${\bf w} =  \sum_{e_i \in  I_1} \eb_i  - \sum_{e_i \notin F'} \eb_i.$$
Then the inner product ${\bf w} \cdot \rho(I)$ with $I \in {\mathcal I}$
is less than or equal to $k$, 
and equal to $k$ if and only if $I$ belongs to $F$.
Thus $F$ corresponds to a face of ${\mathcal P}({\mathcal M})$.
Moreover, its vertices are \(\rho(I_1)\) and points of the form \[ \rho(I_1)+\rho(\{x\}), \] and hence they are affinely independent. Thus, \(F\) is the vertex set of a simplex face of \({\mathcal P}({\mathcal M})\).
\end{proof}

Since ${\mathcal P}_{B} ({\mathcal M})$ is a face of ${\mathcal P}({\mathcal M})$, we have the following.

\begin{cor}
        Let $\mathcal{M}$ be a matroid.
    Then ${\mathcal P}_{B} ({\mathcal M})$ satisfies the clique-face property ($*$).
\end{cor}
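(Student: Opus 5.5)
The corollary follows from the preceding theorem once we know that ${\mathcal P}_{B}({\mathcal M})$ is a face of ${\mathcal P}({\mathcal M})$, together with the remark in the Introduction that the clique-face property $(*)$ passes to faces.

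First, we exhibit the face explicitly. Let $r = {\rm rank}({\mathcal M})$ and take the linear functional ${\bf w} = \sum_{i=1}^n \eb_i$. For every $I \in {\mathcal I}$ we have ${\bf w}\cdot\rho(I) = |I| \le r$, with equality exactly when $I$ is a basis. Hence the supporting hyperplane $\{{\bf x} : {\bf w}\cdot{\bf x} = r\}$ meets ${\mathcal P}({\mathcal M})$ in ${\rm conv}\{\rho(B) : B \in {\mathcal B}\} = {\mathcal P}_B({\mathcal M})$. In particular the vertices of ${\mathcal P}_B({\mathcal M})$ are exactly the vertices $\rho(B)$ of ${\mathcal P}({\mathcal M})$ lying on this face.

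Next, we transfer the property. Let $C$ be a clique of ${\rm sk}({\mathcal P}_B({\mathcal M}))$. Edges of a face $Q$ of a polytope $P$ are exactly the edges of $P$ whose endpoints lie in $Q$, so $C$ is also a clique of ${\rm sk}({\mathcal P}({\mathcal M}))$. By the preceding theorem, $C$ is the vertex set of a face $G$ of ${\mathcal P}({\mathcal M})$. The face $G$ has all its vertices in the face ${\mathcal P}_B({\mathcal M})$, so $G = G \cap {\mathcal P}_B({\mathcal M})$. An intersection of faces is a face, and a face of $P$ contained in a face $Q$ is a face of $Q$. Therefore $C$ is the vertex set of a face of ${\mathcal P}_B({\mathcal M})$, which proves $(*)$.

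No step here is a real obstacle. The only points needing care are the standard facts that edges of a face are the edges of $P$ lying in it, and that a face of $P$ contained in $Q$ is a face of $Q$. Both follow by restricting supporting hyperplanes.
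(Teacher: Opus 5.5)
Your proof is correct and follows the paper's route: the paper derives the corollary in one line from the fact that ${\mathcal P}_{B}({\mathcal M})$ is a face of ${\mathcal P}({\mathcal M})$, the theorem for independence polytopes, and the Introduction's remark that $(*)$ passes to faces. You additionally make explicit the supporting hyperplane $\sum_i x_i = {\rm rank}({\mathcal M})$ and the standard facts about faces of faces, which the paper leaves implicit.
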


\subsection{Stable set polytopes}

Let $G$ be a simple graph on $[d]$. A subset $S \subset [d]$ is called a {\em stable set} (or an {\em independent set}) of $G$
if $\{i,j\} \notin E(G)$ for all $i,j \in S$ with $i \neq j$.
In particular, $\emptyset$ and $\{i\}$ with $i \in [d]$
are stable.
Let $S(G)$ denote
the set of all stable sets of $G$.
Then the \textit{stable set polytope} ${\rm STAB}(G)$ of $G$ is 
the convex hull of
\[
\{\rho(S) : S \in S(G) \}.
\]
It was shown in \cite{propertyE} that any stable set polytope of a graph satisfies (E).
Edges of stable set polytopes are characterized as follows.

\begin{prop}[{\cite[Theorem 6.2]{C}}]
        Let $G$ be a graph.
    Then ${\rm conv}(\rho(S), \rho(S'))$ with $S, S' \in S(G)$ is an edge of ${\rm STAB}(G)$ if and only if the subgraph $H$ of $G$ induced by $S \ \Delta \ S'$ 
is connected.
\end{prop}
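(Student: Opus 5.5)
The statement has two directions, and I would prove both directly from the vertex description of ${\rm STAB}(G)$, using only that its vertices are the $0/1$ vectors $\rho(T)$ with $T\in S(G)$. Throughout, write $A=S\setminus S'$, $B=S'\setminus S$ and $C=S\cap S'$. Then $S\ \Delta\ S'=A\sqcup B$, and $H$ is bipartite with sides $A$ and $B$ because $S$ and $S'$ are stable. A fact used in both directions: a vertex $v\in C$ has no neighbour in $S\cup S'$, since $v$ lies in both stable sets $S$ and $S'$.

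\emph{$H$ disconnected $\Rightarrow$ not an edge.} Let $X$ be the vertex set of one connected component of $H$, and let $Y=(A\sqcup B)\setminus X$, which is nonempty. I swap $S$ and $S'$ on $X$:
\[
T=(S\setminus X)\cup(S'\cap X),\qquad T'=(S'\setminus X)\cup(S\cap X).
\]
\begin{itemize}
\item $T$ and $T'$ are stable. An edge inside $T$ would have to join $S'\cap X$ to $S\setminus X$. Vertices of $C$ have no neighbours in $S\cup S'$, and there are no edges of $G$ between $X$ and $Y$. The same argument applies to $T'$.
\item $\rho(T)+\rho(T')=\rho(S)+\rho(S')$ holds coordinatewise.
\item $T\neq S$, since they differ on $X\neq\emptyset$. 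Also $T\ne S'$, since they differ on $Y\neq\emptyset$.
\end{itemize}
So the midpoint of ${\rm conv}(\rho(S),\rho(S'))$ is also the midpoint of a different pair of vertices. Equivalently, a linear functional maximized exactly on this segment would also be maximized at $\rho(T)$. Hence the segment is not an edge.

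\emph{$H$ connected $\Rightarrow$ edge.} I will construct a weight vector $\wb$ whose maximizers over $S(G)$ are exactly $S$ and $S'$. Fix a large $M$ and set
\[
\wb\cdot\rho(T)=M|T\cap C|-M|T\setminus(S\cup S')|+\sum_{\{a,b\}\in E(H)}\bigl(x_a+x_b\bigr)(T),
\]
where $x_v(T)=1$ if $v\in T$ and $0$ otherwise. The steps are as follows.
\begin{enumerate}
\item Every maximizer $T$ satisfies $T\subseteq S\cup S'$, because of the penalty $-M$ on vertices outside $S\cup S'$.
\item Every maximizer satisfies $T\supseteq C$. Adding vertices of $C$ preserves stability within $S\cup S'$ by the fact above, and each added vertex gains $M$.
\item On $A\sqcup B$, each edge term $x_a+x_b$ is at most $1$ because $T$ is stable. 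So the last sum is at most $|E(H)|$, and $S$ and $S'$ attain this bound, since each edge of $H$ has exactly one endpoint in $A$ and one in $B$.
\item Consequently a maximizer $T$ must contain exactly one endpoint of every edge of $H$.
\item Connectivity of $H$ propagates this choice along paths. Since $H$ is bipartite with sides $A$ and $B$, we get $T\cap(A\sqcup B)=A$ or $T\cap(A\sqcup B)=B$, so $T=S$ or $T=S'$.
\end{enumerate}
Thus $\{\ab\in{\rm STAB}(G):\wb\cdot\ab=\max\}$ is exactly ${\rm conv}(\rho(S),\rho(S'))$, which is therefore an edge.

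The main obstacle is this second direction, specifically choosing a functional that isolates both endpoints at once. The edge-sum $\sum(x_a+x_b)$ handles this cleanly. Step 5 is the point where connectivity is used and where care is needed. One must also check that the $M$-terms cannot be traded against the edge terms. This holds once $M>|E(H)|$, since the edge sum then varies by less than any single $M$-term.
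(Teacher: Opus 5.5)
Your proof is correct in both directions. The paper gives no argument of its own for this statement: it is quoted from Chv\'atal \cite[Theorem 6.2]{C} and used as a black box. So there is nothing in the paper to compare against, and your write-up works as a self-contained, elementary replacement for the citation.

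It also buys more than the statement asks. Your swap construction, with $T=(S\setminus X)\cup(S'\cap X)$ and $T'=(S'\setminus X)\cup(S\cap X)$, does not merely show non-adjacency. It exhibits a second decomposition $\rho(S)+\rho(S')=\rho(T)+\rho(T')$ with $\{T,T'\}\neq\{S,S'\}$. Combine this with your second direction: if $\rho(S)$ and $\rho(S')$ are not adjacent, then $H$ is disconnected, and hence the decomposition is not unique. That is exactly condition (E) for ${\rm STAB}(G)$. The paper instead imports (E) from \cite{propertyE} before applying Theorem~\ref{indispensable} to stable set polytopes, so your argument recovers that input for free.

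Three small points.

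\begin{itemize}
\item Your closing worry about trading $M$-terms against edge terms is unnecessary. The functional $\wb$ has coordinate $M$ on $C$, coordinate $-M$ outside $S\cup S'$, and coordinate $\deg_H(v)$ at each $v\in A\sqcup B$. For every stable $T$, the three summands are separately bounded by $M|C|$, $0$ and $|E(H)|$. Both $S$ and $S'$ attain all three bounds at once. Hence any $M>0$ works, and equality forces $C\subseteq T\subseteq S\cup S'$ together with exactly one endpoint of each edge of $H$ lying in $T$.
\item You should say explicitly that $S\neq S'$; this is implicit in the statement, and otherwise the segment is a point.
\item In step 5, the degenerate case where $H$ is a single vertex with no edges deserves a sentence. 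There one of $A$, $B$ is empty, so $T\cap(A\sqcup B)\in\{A,B\}$ holds trivially.
\end{itemize}
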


On the other hand, we have the following conjecture.

\begin{conjecture}[\cite{ER, OST}] \label{sspconjecture}
Let $G$ be a perfect graph.
Then the following are equivalent:
\begin{itemize}
    \item[(i)] 
    $G$ is perfectly contractile;
    \item[(ii)]
$I_{{\rm STAB}(G)}$ is quadratic;
    \item[(iii)]
    $G$ contains no even antiholes and no odd prisms.
    \end{itemize}
    
\end{conjecture}

Both ``(i) $\Longrightarrow$ (iii)" and  ``(ii) $\Longrightarrow$ (iii)" hold in general \cite{LMR, OST}.
Conjecture \ref{sspconjecture} is true for several classes of graphs,
including dart-free graphs, even prism-free graphs, 
weakly chordal graphs,
Meyniel graphs, and perfectly orderable graphs;
see \cite{OTqtr} and the references therein.

\begin{prop}
    Let $G$ be a graph.
    If ${\rm STAB}(G)$ has the clique-face property \((*)\), then $G$ contains no even antiholes and no odd prisms.

\end{prop}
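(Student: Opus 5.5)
We argue by contraposition. Assume $G$ contains an even antihole or an odd prism as an induced subgraph $H$, and exhibit a clique of ${\rm sk}({\rm STAB}(G))$ that is not the vertex set of a face.

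\emph{Reduction to $H$.} For an induced subgraph $H$ on $W\subset[d]$, the stable sets of $H$ are exactly the stable sets $S$ of $G$ with $S\subset W$. Hence ${\rm STAB}(H)$ is the face of ${\rm STAB}(G)$ cut out by the supporting hyperplane $\sum_{i\notin W} x_i = 0$. Faces inherit ($*$), as noted in the Introduction. It therefore suffices to show that ${\rm STAB}(H)$ fails ($*$) when $H$ is an even antihole $\overline{C_{2n}}$ ($n\ge 3$) or an odd prism.

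\emph{Strategy.} Take the known degree-$\ge 3$ binomials that obstruct quadratic generation for these graphs, namely those behind the implication (ii)$\Rightarrow$(iii) in \cite{OST}. Let $C_1$ be the multiset of stable sets on one side of such a binomial. By (E), which holds by \cite{propertyE}, and the argument in the proof of Theorem \ref{indispensable}, $C_1$ is a clique whenever no two of its members are non-adjacent in ${\rm sk}$. Moreover $C_1$ cannot span a face, since the binomial would then live in the toric ideal of a simplex. So the plan is to write down explicit cliques and verify adjacency with Chv\'atal's criterion: $S\,\Delta\,S'$ induces a connected subgraph.

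\emph{Even antihole $\overline{C_{2n}}$, vertices $1,\dots,2n$.} Stable sets of size $2$ are exactly the consecutive pairs $\{i,i+1\}$, and there are none of size $3$ when $n\ge 3$. Take the clique
\[
\{\rho(\{1,2\}),\rho(\{3,4\}),\dots,\rho(\{2n-1,2n\})\}.
\]
The symmetric difference of two of these pairs is $\{2i-1,2i,2j-1,2j\}$. In the complement of a cycle this induces a connected graph, because $2i-1$ is adjacent to $2j$ except when they are cycle-neighbours, and similarly for the other pairs. So these are pairwise adjacent. Their sum is $\mathbf 1$, which also equals $\rho(\{2,3\})+\rho(\{4,5\})+\cdots+\rho(\{2n,1\})$. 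As in the proof of Theorem \ref{indispensable}, taking inner products with a supporting vector shows that if the clique spanned a face $F$, then every $\rho(\{2i,2i+1\})$ would lie in $F$. That contradicts $F$ being a simplex with exactly these $n$ vertices.

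\emph{Odd prism.} Two disjoint triangles $\{a_1,a_2,a_3\}$ and $\{b_1,b_2,b_3\}$ are joined by three vertex-disjoint paths $P_k$ from $a_k$ to $b_k$ of odd length, and the graph is induced. Choose stable sets $S_1,S_2,S_3$ that each take alternate vertices of the paths, with one triangle vertex in each, together with a complementary triple $T_1,T_2,T_3$ with the same total. Their existence follows from the parity of the paths and gives $\sum\rho(S_k)=\sum\rho(T_k)$. We must then check that the $S_k$ are pairwise adjacent, i.e.\ that each $S_i\,\Delta\,S_j$ is connected, presumably as one long path running through two of the $P_k$ and the triangle edges.

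\emph{Main obstacle.} The hard step is choosing the three stable sets for the odd prism so that all pairwise symmetric differences are connected while the sum relation still holds. If three sets cannot achieve this, I would instead use the degree-$\ge 3$ relation from \cite{OST} and argue non-adjacency only for pairs that give quadratic moves.
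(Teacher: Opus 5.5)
Your reduction to an induced subgraph and your even-antihole argument are correct, and they are more elementary than the paper's route. For $n\ge 3$, the pairs $\{2i-1,2j-1\}$ and $\{2i,2j\}$ are always edges of $\overline{C_{2n}}$, and at most one of the two cross pairs is a cycle edge. So Chv\'atal's criterion makes the $n$ sets $\{2i-1,2i\}$ pairwise adjacent. The identity with the shifted matching, plus a supporting hyperplane, then shows they span no face. The paper handles both obstructions at once. It takes, from the proofs of \cite[Theorem 1.7]{OST} and \cite[Proposition 11]{MOS}, an indispensable binomial of degree $\ge 3$ in $I_{{\rm STAB}(G)}$. It then applies Theorem~\ref{indispensable}, which is valid because ${\rm STAB}(G)$ satisfies (E) by \cite{propertyE}.

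The genuine gap is the odd prism, which you leave open, and the obstacle you flag is real. Write $P_k=(a_k=p_k^0,\dots,p_k^{\ell_k}=b_k)$, and let $E_k$ and $O_k$ be its even- and odd-indexed vertices. The obvious alternating sets $E_1\cup O_2$ and $E_2\cup O_3$ are \emph{not} adjacent once $\ell_1\ge 3$: their symmetric difference is $E_1\cup V(P_2)\cup O_3$, in which $p_1^2$ is isolated.

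Two other points do not help. Your sentence ``$C_1$ is a clique whenever no two of its members are non-adjacent'' is a tautology and extracts nothing from (E). The usable statement is this: if a degree-$\ge 3$ monomial of a binomial in $I_P$ is divisible by no monomial of a quadratic binomial (for instance, an indispensable monomial), then (E) makes its support a clique, and the proof of Theorem~\ref{indispensable} shows it spans no face. Your fallback (``argue non-adjacency only for pairs that give quadratic moves'') is not an argument.

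You can close the gap by quoting the indispensable binomial as the paper does, or explicitly, by taking
\begin{align*}
S_1&=E_1\cup O_2\cup(E_3\setminus\{a_3\}), & T_1&=E_1\cup O_3\cup(O_2\setminus\{b_2\}),\\
S_2&=E_2\cup O_3\cup(O_1\setminus\{b_1\}), & T_2&=E_2\cup O_1\cup(E_3\setminus\{a_3\}),\\
S_3&=E_3\cup O_1\cup(O_2\setminus\{b_2\}), & T_3&=E_3\cup O_2\cup(O_1\setminus\{b_1\}).
\end{align*}
All six sets are stable. By Chv\'atal's criterion, the $S_k$ are pairwise adjacent, because the following symmetric differences induce connected subgraphs:
\begin{itemize}
\item $S_1\,\Delta\,S_2=(V(P_1)\setminus\{b_1\})\cup V(P_2)\cup(V(P_3)\setminus\{a_3\})$, joined through the edges $a_1a_2$ and $b_2b_3$;
\item $S_2\,\Delta\,S_3=\{b_1\}\cup(V(P_2)\setminus\{b_2\})\cup V(P_3)$, joined through $b_1b_3$ and $a_2a_3$;
\item $S_1\,\Delta\,S_3=V(P_1)\cup\{a_3,b_2\}$, joined through $a_1a_3$ and $b_1b_2$.
\end{itemize}
Moreover, $\sum_k\rho(S_k)=\sum_k\rho(T_k)$. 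Finally, $T_1$ contains both $a_1$ and $b_3$, which no $S_k$ does, so $T_1$ is not among the $S_k$. Your supporting-hyperplane argument then finishes the proof.
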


\begin{proof}
Suppose that $G$ has an even antihole or an odd prism.
Then proofs of \cite[Theorem 1.7]{OST} and \cite[Proposition 11]{MOS}
guarantee that $I_{{\rm STAB}(G)}$ has an indispensable binomial of degree $\ge 3$. 
From Theorem~\ref{indispensable},  there exists a clique of ${\rm sk}({\rm STAB}(G))$ 
that does not correspond to any face of ${\rm STAB}(G)$.    
\end{proof}

Hence we have
\[
\begin{array}{ccccc}
     &   &  &  & G \mbox{ is perfectly contractile}
\\     &   &  &  & \Downarrow
\\
I_{{\rm STAB}(G)} \mbox{ is quadratic }     &  \Rightarrow &
\mbox{the clique-face property } (*) &\Rightarrow &
\begin{array}{c} G \mbox{ contains no even antiholes}\\
 \mbox{and no odd prisms}.
 \end{array}
\end{array}
\]

\smallskip

\noindent
Thus these four conditions are expected to be equivalent for perfect graphs.


\begin{thebibliography}{99}

\bibitem{propertyE}
F. Aliniaeifard, C. Benedetti, N. Bergeron, S. X. Li, F. Saliola,
Stable set polytopes and their $1$-skeleta, preprint\\
{\tt arXiv:1804.00360}

\bibitem{Tsuika}
S. Backman, N. Cheung, M. Laso\'n, G. Liu, M. Micha\l{}ek,
The Gr\"obner Version of White's Conjecture is False,
{\tt arXiv:2606.13960}


\bibitem{BM}
F. Barahona and A. R. Mahjoub, 
On the cut polytope,
{\it Math. Program.} {\bf 36} (1986), 157--173.

\bibitem{BGT}
W. Bruns, J. Gubeladze and N. V. Trung,
Normal polytopes, triangulations, and Koszul algebras,
{\it J. Reine Angew. Math.} {\bf 485} (1997), 123--160.



\bibitem{C}
 V. Chv\'{a}tal, On certain polytopes associated with graphs, 
 {\it J. Combin. Theory Ser. B}  {\bf 18} (1975), 138--154.

\bibitem{DFMR}
J. A. De Loera, L. Ferroni, S. Morales and J. Rambau,
There are matroid toric ideals without quadratic Gr\"obner bases,
{\tt arXiv:2606.11014}.


\bibitem{Eng}
A. Engst\"{o}m, Cut ideals of $K_4$-minor free graphs are generated by quadrics, \textit{Michigan Math. J.} {\bf 60} (2011), 705--714.

\bibitem{ER}
H. Everett and B. A. Reed,
Problem session on parity problems,
{\em in} ``Perfect Graphs Workshop",
Princeton University, New Jersey, June 1993.


\bibitem{HK}
 D. Hausmann and B. Korte,
 Colouring criteria for adjacency on $0$-$1$ polyhedra, 
Math. Program. Stud. {\bf  8} (1978) 106--127. 

\bibitem{HHO}
J. Herzog, T. Hibi and H. Ohsugi, ``Binomial ideals"
Graduate Texts in Math. {\bf 279}, Springer, Cham, 2018.

\bibitem{Las}
M. Larson,
Counterexamples to two conjectures about matroids,
{\tt arXiv:2607.02208}


\bibitem{LMR}
C. Linhares Sales, F. Maffray and B. A. Reed,
On planar perfectly contractile graphs,
\textit{Graphs Combin.} {\bf 13} (1997), 167--187.

\bibitem{MOS}
K. Matsuda, H. Ohsugi, and K. Shibata,
Toric rings and ideals of stable set polytopes,
{\it Mathematics} 7:613, 2019.

\bibitem{Mor}
A. Mori,
Simplex faces of order and chain polytopes,
{\it Order} {\bf 42} (2025) 805--810.

\bibitem{cpureR1}
H. Ohsugi,
A geometric definition of combinatorial pure subrings and Gr\"obner bases of toric ideals of
positive roots,
{\it Comment. Math. Univ. St. Pauli} {\bf 56} (2007) 27--44.

\bibitem{cpure}
H. Ohsugi, J. Herzog and T. Hibi,
Combinatorial pure subrings,
\textit{Osaka J. Math.} {\bf 37} (2000), 745--757.


\bibitem{OHquad}
H. Ohsugi and T. Hibi, Toric ideals generated by quadratic binomials, \textit{J. Algebra}, {\bf 218} (1999),
 509--527.


 
\bibitem{OHindispensable}
H. Ohsugi and T. Hibi, Indispensable binomials of finite graphs, \textit{J. Algebra Appl.}, {\bf 4} (2005), 421--434.

\bibitem{OHsimple}
H. Ohsugi and T. Hibi, Simple polytopes arising from finite graphs, 
{\it in} ``Proceedings of the
2008 International Conference on Information Theory and Statistical Learning (ITSL)", 2008.




\bibitem{OST}
H. Ohsugi, K. Shibata and A. Tsuchiya,
Perfectly contractile graphs and quadratic toric rings,
{\em Bull. Lond. Math. Soc.} {\bf 55} (2023), 1264--1274.

\bibitem{OTqtr}
H. Ohsugi and A. Tsuchiya,
Kempe equivalence and quadratic toric rings,
{\it J. Pure Appl. Algebra} {\bf 230} (2026), 108257.



\bibitem{Oxley}
J. G. Oxley,
Matroid theory, volume 21 of Oxford Graduate Texts in Mathematics, Oxford University Press, Oxford, second edition, 2011.

\bibitem{RTT}
E. Reyes, C. Tatakis and A. Thoma,
Minimal generators of toric ideals of graphs,
{\it Adv. Appl. Math.} {\bf 48} (2012), 64--78.

\bibitem{SS}
B. Sturmfels and S. Sullivant, Toric geometry of cuts and splits,
\textit{Michigan Math. J.} {\bf 57} (2008), 689--709.

\bibitem{PGbook}
G.~P. Villavicencio,
``Polytopes and graphs",
Cambridge Stud. Adv. Math. {\bf 211},
Cambridge University Press, Cambridge, 2024. 

\bibitem{Vecw}
 R.H. Villarreal, Rees algebras of edge ideals, 
 {\it Comm. Algebra} {\bf 23} (1995), 3513--3524.

\bibitem{V}
R. H. Villarreal, ``Monomial Algebras", 2nd ed., Chapman \& Hall/CRC, Boca Raton, FL, 2015.

\bibitem{WC}
N. L. White, A unique exchange property for bases, 
\textit{Linear Algebra Appl.} {\bf 31} (1980),
81--91.

\bibitem{W}
H. Whitney,
Non-separable and planar graphs,
{\it Trans. Amer. Math. Soc.} {\bf 34} (1932), 339--362.


\bibitem{Z}
G. M. Ziegler, ``Lectures on $0/1$-Polytopes",
{\it In} Polytopes - Combinatorics and Computation,
(G. Kalai and G. Ziegler Eds)
DMV Seminar, vol 29. Birkh\"{a}user, Basel, 2000.
\end{thebibliography}
\end{document}